\documentclass[a4paper,12pt]{article}

\usepackage[textwidth=125mm, textheight=195mm]{geometry}
\usepackage[english]{babel}
\usepackage{graphicx}
\usepackage{amsmath}
\usepackage{amsfonts}
\usepackage{amsthm}
\usepackage{epstopdf}
\usepackage{color}

\geometry{verbose,a4paper,tmargin=20mm,bmargin=30mm,lmargin=19mm,rmargin=19mm}

\begin{document}

\newcommand{\wk}{\mbox{$\,<$\hspace{-5pt}\footnotesize )$\,$}}

\numberwithin{equation}{section}
\newtheorem{teo}{Theorem}
\newtheorem{lemma}{Lemma}

\newtheorem{coro}{Corollary}
\newtheorem{prop}{Proposition}
\theoremstyle{definition}
\newtheorem{definition}{Definition}
\theoremstyle{remark}
\newtheorem{remark}{Remark}

\newtheorem{scho}{Scholium}
\newtheorem{open}{Question}
\newtheorem{example}{Example}
\numberwithin{example}{section}
\numberwithin{lemma}{section}
\numberwithin{prop}{section}
\numberwithin{teo}{section}
\numberwithin{definition}{section}
\numberwithin{coro}{section}
\numberwithin{figure}{section}
\numberwithin{remark}{section}
\numberwithin{scho}{section}

\bibliographystyle{abbrv}

\title{Some topics in differential geometry of normed spaces}
\date{}

\author{Vitor Balestro\footnote{Corresponding author}  \\ CEFET/RJ Campus Nova Friburgo \\ 28635000 Nova Friburgo \\ Brazil \\ vitorbalestro@id.uff.br \and Horst Martini \\ Fakult\"{a}t f\"{u}r Mathematik \\ Technische Universit\"{a}t Chemnitz \\ 09107 Chemnitz\\ Germany \\ martini@mathematik.tu-chemnitz.de \and  Ralph Teixeira \\ Instituto de Matem\'{a}tica e Estat\'{i}stica  \\ Universidade Federal Fluminense \\24210201 Niter\'{o}i\\ Brazil \\ ralph@mat.uff.br}

\maketitle

\begin{abstract} For a surface immersed in a three-dimensional space endowed with a norm instead of an inner product, one can define analogous concepts of curvature and metric. With these concepts in mind, various questions immediately appear. The aim of this paper is to propose and answer some of those questions. In this framework we prove several characterizations of minimal surfaces, and analogues of some global theorems (e.g., Hadamard-type theorems) are also derived. A result on the curvature of surfaces of constant Minkowski width is also given. Finally, we study the ambient metric induced on the surface, proving an extension of the classical Bonnet theorem.
\end{abstract}

\noindent\textbf{Keywords}: Birkhoff-Gauss map, Birkhoff orthogonality, Bonnet's theorem, constant width surface, Dupin indicatrix, (weighted) Dupin metric, geodesically complete surfaces, minimal surface, Minkowski Gaussian curvature, Minkowski mean curvature, normed spaces, perimeter of a normed space, Riemannian metric

\bigskip

\noindent\textbf{MSC 2010:} 53A35, 53A15, 53A10, 58B20, 52A15, 52A21, 46B20

\section{Introduction}

This is the last of a series of three papers devoted to study the differential geometry of surfaces immersed in three-dimensional real vector spaces endowed with a norm, which we call \emph{normed} (=\emph{Minkowski}) \emph{spaces}. In \cite{diffgeom}, the first paper of the series, the core of the theory was developed. There were introduced concepts of \emph{Minkowski Gaussian, mean} and \emph{principal curvatures} from regarding the normal map based on \emph{Birkhoff orthogonality}. The second paper \cite{diffgeom2} was devoted to explore the theory from the viewpoint of affine differential geometry. The aim of this third paper is to use the machinery developed previously to investigate some classical topics in our new framework. Now we briefly recall some definitions given in \cite{diffgeom} and \cite{diffgeom2}.\\

We will always work with a surface immersion $f:M\rightarrow(\mathbb{R}^3,||\cdot||)$, where the norm $||\cdot||$ is \emph{admissible}, meaning that its \emph{unit sphere} $\partial B:=\{x\in\mathbb{R}^3:||x|| = 1\}$ has strictly positive Gaussian curvature in the usual Euclidean geometry of $\mathbb{R}^3$ (also, we denote the usual inner product in this space by $\langle\cdot,\cdot\rangle$). The norm $||\cdot||$ induces an orthogonality relation between directions and planes given as follows: we say that a non-zero vector $v \in \mathbb{R}^3$ is \emph{Birkhoff orthogonal} to a plane $P$ (denoted $v \dashv_B P$) if $||v + tw|| \geq ||v||$ for any $t \in \mathbb{R}$ and $w \in P$; see \cite{alonso}. In other words, $v$ is Birkhoff orthogonal to $P$ if $P$ supports the \emph{unit ball} $B:=\{x \in \mathbb{R}^3:||x|| \leq 1\}$ at $v/||v||$. It follows from the admissibility of the norm that this relation is unique both at left (in the sense of directions) and at right (in the sense of planes). \\

For a surface immersion $f:M\rightarrow(\mathbb{R}^3,||\cdot||)$, we define the \emph{Birkhoff-Gauss map} $\eta:M\rightarrow\partial B$ as follows: we associate each $p \in M$ to a unit vector $\eta(p)$ such that $\eta(p) \dashv_B T_pM$. Notice that we have two possible choices for each point, and therefore such a map should be only locally defined. However, if the surface is orientable, then the Birkhoff-Gauss map can be defined globally, and hence we will assume this hypothesis throughout the text. As it is proved in \cite{diffgeom}, this defines an \emph{equiaffine transversal vector field} in $M$ (in the sense of \cite{nomizu}), and the associated \emph{Gauss formula} reads
\begin{align*} D_XY = \nabla_XY + h(X,Y)\eta,
\end{align*}
for any vector fields $X,Y \in C^{\infty}(TM)$, with $D$ denoting the standard connection on $\mathbb{R}^3$. The bilinear map $h$ is called the \emph{affine fundamental form}, and in some sense it plays the role of the classical \emph{second fundamental form}. Let $\xi$ denote the Euclidean Gauss map of $M$, and let $u^{-1}$ denote the Euclidean Gauss map of the unit sphere $\partial B$. Up to re-orientation, we clearly have $\eta = u \circ \xi$. Also, the following expression to $h$ is straightforward:
\begin{align}\label{exph} h(X,Y) = \frac{\langle D_XY,\xi\rangle}{\langle\eta,\xi\rangle} = -\frac{\langle Y,d\xi_pX\rangle}{\langle\eta,\xi\rangle} = -\frac{\langle du^{-1}_{\eta(p)}T,d\eta_pX\rangle}{\langle\eta,\xi\rangle},
\end{align}
for any $p \in M$ and $X,Y \in T_pM$. \\

The \emph{Minkowski Gaussian curvature} and the \emph{Minkowski mean curvature} of $M$ at $p$ are defined as $K:=\mathrm{det}(d\eta_p)$ and $H:=\frac{1}{2}\mathrm{tr}(d\eta_p)$, respectively. The \emph{principal curvatures} are the (real) eigenvalues of $d\eta_p$, and their existence is proved in \cite{diffgeom}. The associated eigenvectors are called \emph{principal directions}. The \emph{normal curvature} of $M$ at $p$ in a given direction $V \in T_pM$ was defined in \cite{diffgeom} in terms of planar sections, and can be equivalently defined as
\begin{align*} k_{M,p}(V) := \frac{\langle du^{-1}_{\eta(p)}V,d\eta_pV\rangle}{\langle du^{-1}_{\eta(p)}V,V\rangle}.
\end{align*}
As it is discussed in \cite{diffgeom2}, the normal curvature is closely associated to a Riemannian metric on $M$ called \emph{Dupin metric}. This is the metric whose unit circle, at each $p \in M$, is the (usual) Dupin indicatrix of $T_{\eta(p)}\partial B$. This is simply given by
\begin{align*} \langle X,Y\rangle_p := \langle du^{-1}_{\eta(p)}X,Y\rangle,
\end{align*}
for any $p \in M$ and $X,Y \in T_pM$. Dividing this metric by $\langle\eta(p),\xi(p)\rangle$, we obtain the \emph{weighted Dupin metric}, which will be important for our purposes. \\

We recall that all of the concepts above were defined and studied in the papers \cite{diffgeom} and \cite{diffgeom2}, and the reader is invited to consult them for a perfect acquitance with the area subject. We head now to briefly describe the structure of the present paper. In Section \ref{minimal} we study \emph{minimal surfaces} in our context, proving that we can re-obtain several characterizations of such surfaces, all of them being analogues of results in the Euclidean subcase. In Section \ref{global} we obtain some global theorems, such as \emph{Hadamard-type theorems}, as immediate consequences of their Euclidean versions. In Section \ref{constantwidth} we prove a result concerning the curvatures of \emph{constant Minkowski width surfaces}, which is also an extension of a known result of classical differential geometry. Finally, Sections \ref{metric} and \ref{control} are devoted to understand the behavior of the ambient induced metric on $M$. In particular, a version of \emph{Bonnet's classical theorem} is obtained, and we also give an estimate for the \emph{perimeter} of the normed space (in the sense of Sch\"{a}ffer, see \cite{schaffer}). \\

For general references in Minkowski geometry, we refer the reader to \cite{martini2}, \cite{martini1}, and \cite{thompson}. The differential geometry of curves in normed planes was studied in \cite{Ba-Ma-Sho}. Other approaches to the differential geometry of normed spaces can be found in \cite{Bus3}, \cite{Gug2} and \cite{cabezas}. Immersed surfaces with the induced ambient norm are, in particular, Finsler manifolds, and in this regard we refer the reader to \cite{Bus2} and \cite{Bus7}.  

\section{Minimal surfaces}\label{minimal}

Let $f:M\rightarrow(\mathbb{R}^3,||\cdot||)$ be a surface immersed in an admissible Minkowski space. We say that $M$ is a \emph{minimal surface} if its Minkowski mean curvature vanishes everywhere. In the Euclidean subcase, minimal surfaces are characterized in terms of critical points of the area functions of their normal variations. There is also an analogous result when the considered transversal vector field is the affine normal field (see \cite[Chapter III, Section 11]{nomizu}). We will see that the general Minkowski case has a similar behavior when one endows the surface with the \emph{induced area element} $\omega$ being the $2$-form defined on the tangent bundle $TM$ as
\begin{align*} \omega(X,Y) := \mathrm{det}[X,Y,\eta],
\end{align*}
where $\mathrm{det}$ is the usual determinant in $\mathbb{R}^3$. This $2$-form yields the standard area element if the considered norm is Euclidean. Hence we may define the \emph{area of} $M$ as
\begin{align*} A(M) := \int_M\omega.
\end{align*}
Let now $D \subseteq M$ be a \emph{domain} in $M$, which is an open, connected subset whose boundary is homeomorphic to a sphere. Assume that $\bar{D}\subseteq M$, where $\bar{D}$ is the union of $D$ with its boundary. Let $g:\bar{D}\rightarrow\mathbb{R}$ be any smooth function. The \emph{Birkhoff normal variation} of $\bar{D}$ with respect to $g$ is the map $F:\bar{D}\times(-\varepsilon,\varepsilon)\rightarrow(\mathbb{R}^3,||\cdot||)$ given by
\begin{align*} F(p,t) = F_t(p) = p + tg(p)\eta(p),
\end{align*}
where we identify $M$ within $\mathbb{R}^3$ with its image under $f$, as usual. It is clear that this construction yields a family of immersed surfaces parametrized by $t$. We will denote each  of these surfaces by $D_t$. Their respective Birkhoff normal vector fields and associated area elements will be denoted by $\eta_t$ and $\omega_t$. The function which associates each $t$ to the area of the surface $\bar{D}_t$ is then given by
\begin{align*} A(t) := \int_{\bar{D}}\omega_t.
\end{align*}

\begin{teo} Let $f:M\rightarrow(\mathbb{R}^3,||\cdot||)$ be an immersed surface whose Minkowski Gaussian curvature is negative. Then $M$ is a minimal surface if and only if for each domain $D \subseteq M$ and each Birkhoff normal variation of $D$ we have $A'(0) = 0$.
\end{teo}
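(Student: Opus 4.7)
My plan is to compute $A'(0)$ directly and show that it equals $2\int_{\bar D} Hg\,\omega$; both implications then follow readily. Fix a local chart $\sigma(u,v)$ on (a portion of) $\bar D$ and write the pulled-back area element as
\[
\omega_t = \det\bigl[(F_t\circ\sigma)_u,\,(F_t\circ\sigma)_v,\,\tilde\eta(\cdot,t)\bigr]\,du\,dv,
\]
where $\tilde\eta(p,t) := \eta_t(F_t(p))$. Since $F_t = \mathrm{id} + tg\eta$, we have $(F_t\circ\sigma)_u = \sigma_u + t\bigl(\sigma_u(g)\eta + g\,d\eta(\sigma_u)\bigr)$, and analogously for the $v$-derivative. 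Differentiating the determinant at $t=0$ and expanding by trilinearity yields three contributions: two from varying the tangent factors and one from $\partial_t\tilde\eta|_{t=0}$.

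For the tangent contributions, any subterm containing $\sigma_u(g)\eta$ or $\sigma_v(g)\eta$ vanishes because $\eta$ then appears twice inside the determinant. Writing $d\eta(\sigma_u) = a\sigma_u + b\sigma_v$ and $d\eta(\sigma_v) = c\sigma_u + d\sigma_v$, the remaining subterms collapse to $g(a+d)\det[\sigma_u,\sigma_v,\eta]$, which equals $2gH\,\omega$ because $\mathrm{tr}(d\eta_p) = 2H(p)$.

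The third contribution $\det[\sigma_u,\sigma_v,\partial_t\tilde\eta|_0]$ is where the Minkowskian structure does the work. The map $t\mapsto\tilde\eta(p,t)$ is a curve in $\partial B$, so $\partial_t\tilde\eta|_0\in T_{\eta(p)}\partial B$. By the Birkhoff orthogonality relation $\eta(p)\dashv_B T_pM$, the supporting plane $T_{\eta(p)}\partial B$ is parallel to $T_pM$, so $\partial_t\tilde\eta|_0$ is a linear combination of $\sigma_u$ and $\sigma_v$ and the determinant vanishes. I expect this step to be the main conceptual hurdle, since it requires unpacking the implicit definition of the Birkhoff--Gauss map of each perturbed surface $D_t$ and then arguing about velocities along the curve of normals.

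Putting the three pieces together and integrating yields $A'(0) = 2\int_{\bar D} Hg\,\omega$. The forward implication $H\equiv 0 \Rightarrow A'(0)=0$ is immediate. For the converse, if $A'(0)=0$ for every smooth $g$, the fundamental lemma of the calculus of variations applied with bump functions $g$ supported near an arbitrary point $p_0\in M$ gives $H(p_0)=0$, using that $\omega$ is a nondegenerate $2$-form (because $\eta$ is transverse to $T_pM$). The hypothesis $K<0$ does not seem to enter this local computation in a direct way; I suspect it is a standing regularity assumption of the section, ensuring for instance invertibility of $d\eta$ and consistent orientation for the variations of $D_t$.
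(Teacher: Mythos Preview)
Your proof is correct and follows the same variational computation as the paper: expand $\omega_t$ as a $3\times 3$ determinant, differentiate at $t=0$, and use that $\partial_t\eta_t|_{t=0}$ lands in $T_{\eta(p)}\partial B = T_pM$ to kill the third term, leaving $A'(0)=\int 2gH\,\omega$. The one structural difference is that the paper works in a chart whose coordinate fields are principal directions, which is why the hypothesis $K<0$ is invoked there: distinct principal curvatures guarantee such a chart exists, and then the tangent contribution is read off directly as $g(\lambda_1+\lambda_2)\det[X,Y,\eta]$. Your version uses an arbitrary chart and extracts $\mathrm{tr}(d\eta_p)$ from the matrix $\begin{pmatrix} a & b\\ c & d\end{pmatrix}$ instead, which is slightly more elementary and, as you correctly suspected, shows that the negative-curvature assumption is not actually needed for the first-variation formula itself.
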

\begin{proof} Assume that $(x,y)$ are coordinates in $D$ such that their coordinate vector fields $X:=\frac{\partial}{\partial x}$ and $Y:=\frac{\partial}{\partial y}$ are principal directions of $M$ in each point (this is possible since the Minkowski principal curvatures are different at each point). For each $p \in D$ and $t \in (-\varepsilon,\varepsilon)$, the vectors $X^t:=(F_{t})_{*}(X)$ and $Y^t:=(F_{t})_*(Y)$ span the tangent space $T_pF_t(D)$. If $\lambda_1$ and $\lambda_2$ denote the principal curvatures of $M$ at $p$, then we have
\begin{align*}X^t = (1+tg\lambda_1)X + tX(g)\eta \ \ \mathrm{and}\\
Y^t = (1+tg\lambda_2)Y + tY(g)\eta,
\end{align*}
for each $(p,t)\in D\times(-\varepsilon,\varepsilon)$. Therefore, the area function $A(t)$ writes
\begin{align*} A(t) = \int_D\omega_t(X^t,Y^t) \ dxdy = \int_D\mathrm{det}[X^t,Y^t,\eta_t] \ dxdy.
\end{align*}
Now we calculate
\begin{align*} \omega_t(X^t,Y^t) = \mathrm{det}[X^t,Y^t,\eta_t] = (1+tg\lambda_1)(1+tg\lambda_2)\mathrm{det}[X,Y,\eta_t] + tX(g)(1+tg\lambda_2)\mathrm{det}[\eta,Y,\eta_t] + \\ + tY(g)(1+tg\lambda_1)\mathrm{det}[\eta,X,\eta_t],
\end{align*}
where we assume that the basis $\{X,Y,\eta\}$ is positively oriented. For each fixed $p \in D$, the vector field $t \mapsto \eta_t(p)$ describes a curve on $\partial B$. Therefore
\begin{align*} \left.\frac{\partial}{\partial t}\eta_t(p)\right|_{t=0} \in \mathrm{span}\{X(p),Y(p)\} = T_pM\,,
\end{align*}
and hence
\begin{align*}\left. \frac{\partial}{\partial t}\omega_t(X^t,Y^t)\right|_{t=0} = g(\lambda_1+\lambda_2)\mathrm{det}[X,Y,\eta].
\end{align*}
It follows immediately that
\begin{align*} A'(0) = \int_Dg(\lambda_1+\lambda_2)\omega = \int_D2gH\omega,
\end{align*}
where $H$ denotes the Minkowski mean curvature of $M$. If $H = 0$, then we have clearly $A'(0) = 0$ for any domain $D \subseteq M$ and any Birkhoff normal variation of $D$. The converse follows from standard analysis arguments.

\end{proof}

As in the Euclidean subcase, we can characterize minimal surfaces (at least the ones of negative Minkowski Gaussian curvature) by means of the affine fundamental form (which, as we remember, plays the role of the second fundamental form). This is our next statement.

\begin{prop} Let $f:M\rightarrow(\mathbb{R}^3,||\cdot||)$ be an immersed surface with Birkhoff-Gauss map $\eta$ and affine fundamental form $h$. Assume that $M$ has negative Minkowski Gaussian curvature $K$. Then $M$ is minimal (in the Minkowski sense) if and only if there exists a function $c:M\rightarrow\mathbb{R}$ such that
\begin{align}\label{hmin} h(d\eta_pX,d\eta_pY) = c(p)\cdot h(X,Y),
\end{align}
for any $p \in M$ and $X,Y \in T_pM$. In this case, $c(p) = -K(p)$ for each $p \in M$.  
\end{prop}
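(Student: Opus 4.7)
The plan is to diagonalize $d\eta_p$ and read off the identity \eqref{hmin} in a basis of principal directions. Since $K<0$, the principal curvatures $\lambda_1(p),\lambda_2(p)$ are real, distinct, and nonzero at every $p\in M$; choose a local frame $X_1,X_2$ of principal directions, so that $d\eta_p X_i=\lambda_i X_i$.

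First I would record that $h$ is symmetric (a standard fact for an equiaffine transversal field, and also a direct consequence of the first expression in \eqref{exph}, since for tangent vector fields $[X,Y]\in TM$ is Euclidean-orthogonal to $\xi$). Combining $h(X_1,X_2)=h(X_2,X_1)$ with the explicit formula \eqref{exph} gives $(\lambda_1-\lambda_2)\langle X_1,X_2\rangle_p=0$ in the Dupin metric, and the distinctness of $\lambda_1$ and $\lambda_2$ forces $\langle X_1,X_2\rangle_p=0$. Consequently $h(X_1,X_2)=0$ automatically, so both sides of \eqref{hmin} vanish when tested against the off-diagonal pair $(X_1,X_2)$.

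On the diagonal pairs $(X_i,X_i)$ the identity \eqref{hmin} reads $\lambda_i^2\,h(X_i,X_i)=c(p)\,h(X_i,X_i)$. Using \eqref{exph} again one computes $h(X_i,X_i)=-\lambda_i\langle X_i,X_i\rangle_p/\langle\eta,\xi\rangle$, which is nonzero because the Dupin metric is Riemannian and $\lambda_i\neq 0$. Hence a scalar $c(p)$ realizing \eqref{hmin} on both $(X_1,X_1)$ and $(X_2,X_2)$ exists if and only if $\lambda_1^2=\lambda_2^2$, and any such $c(p)$ must equal this common value. Under the hypothesis $K=\lambda_1\lambda_2<0$, the condition $\lambda_1^2=\lambda_2^2$ is equivalent to $\lambda_1=-\lambda_2$, i.e.\ $H(p)=0$; in that case $c(p)=\lambda_1^2=-\lambda_1\lambda_2=-K(p)$. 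Bilinearity of $h$ and linearity of $d\eta_p$ then extend the identity from the principal frame to arbitrary $X,Y\in T_pM$, yielding both directions of the equivalence together with the claimed value of $c$.

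The only subtle point is the automatic vanishing of the cross term $h(X_1,X_2)$; this is where the symmetry of $h$ from \cite{diffgeom} is essential, since without it one would be left with an additional algebraic constraint $\lambda_1\lambda_2=c$ that could obstruct the existence of $c$ (in fact it would force $\lambda_1^2=\lambda_1\lambda_2$, hence $\lambda_1=\lambda_2$, contradicting $K<0$).
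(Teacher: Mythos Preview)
Your argument is correct and follows essentially the same approach as the paper: diagonalize $d\eta_p$ in a basis of principal directions, use $h(V_1,V_2)=0$ to kill the off-diagonal terms, and reduce \eqref{hmin} to the condition $\lambda_1^2=\lambda_2^2$, which under $K<0$ is equivalent to $\lambda_1=-\lambda_2$. The only difference is cosmetic: the paper simply normalizes so that $h(V_1,V_1)=-h(V_2,V_2)=1$ and asserts $h(V_1,V_2)=0$ as a known fact, whereas you spell out both the orthogonality $\langle X_1,X_2\rangle_p=0$ and the non-vanishing of $h(X_i,X_i)$ via \eqref{exph}.
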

\begin{proof} Let $p \in M$. Since $K(p) < 0$, we have that the principal curvatures $\lambda_1,\lambda_2 \in \mathbb{R}$ are different, and then we have associated principal directions $V_1,V_2 \in T_pM$ such that $h(V_1,V_2) = 0$. If $X,Y \in T_pM$, then we can decompose them as
\begin{align*} X = \alpha_1V_1 + \alpha_2V_2  \ \mathrm{and} \\ Y = \beta_1V_1 + \beta_2V_2.
\end{align*}
Therefore, rescaling $V_1$ and $V_2$ in order to have $h(V_1,V_1) = -h(V_2,V_2) = 1$, we have
\begin{align*} h(d\eta_pX,d\eta_pY) = h(\alpha_1\lambda_1V_1+\alpha_2\lambda_2V_2,\beta_1\lambda_1V_1+\beta_2\lambda_2V_2) = \alpha_1\beta_1\lambda_1^2 - \alpha_2\beta_2\lambda_2^2.
\end{align*}
On the other hand, $h(X,Y) = \alpha_1\beta_1 - \alpha_2\beta_2$. Hence we have (\ref{hmin}) for all $X,Y \in T_pM$ if and only if $\lambda_1^2 = \lambda_2^2$. This happens if and only if $\lambda_1 = -\lambda_2$, since the Minkowski Gaussian curvature $K = \lambda_1\lambda_2$ is negative.

\end{proof}

Recall that the \emph{weighted Dupin metric} of an immersion $f:M\rightarrow(\mathbb{R}^3,||\cdot||)$ is the metric given by
\begin{align*}  b(X,Y) := \frac{\langle du^{-1}_{\eta(p)}X,Y\rangle}{\langle\eta(p),\xi(p)\rangle},
\end{align*}
for each $p \in M$ and $X,Y \in T_pM$. It is an important fact in classical differential geometry that minimal surfaces can be characterized by their Gauss maps being conformal. Next, we prove something similar for Minkowski minimal surfaces, replacing the usual metric by the weighted Dupin metric.

\begin{teo} An immersed surface with negative Minkowski Gaussian curvature is a minimal surface if and only if its Birkhoff-Gauss map is conformal with respect to the weighted Dupin metric (and, clearly, also with respect to the Dupin metric).
\end{teo}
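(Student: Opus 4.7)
The plan is to test conformality in a basis of principal directions, exactly parallel to the previous proposition. First I would extract from the displayed expression (\ref{exph}) the identity
\begin{align*}
b(d\eta_p X, Y) \;=\; -h(X,Y), \qquad X,Y\in T_pM,
\end{align*}
which is immediate after using the self-adjointness of $du^{-1}_{\eta(p)}$ with respect to $\langle\cdot,\cdot\rangle$ (this is just the fact that the shape operator of the Euclidean sphere $\partial B$ is self-adjoint) and dividing by $\langle\eta,\xi\rangle$. Combined with the symmetry of the affine fundamental form $h$, this identity tells us that $d\eta_p$ is self-adjoint with respect to the weighted Dupin metric $b$.

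Since $K(p)=\lambda_1\lambda_2<0$, the two principal curvatures are distinct real numbers, hence $d\eta_p$ admits a basis $\{V_1,V_2\}$ of principal directions. By the $b$-self-adjointness established in the first step, these eigenvectors are $b$-orthogonal: $b(V_1,V_2)=0$. I would then simply compute, for $i,j\in\{1,2\}$,
\begin{align*}
b(d\eta_pV_i,d\eta_pV_j) \;=\; \lambda_i\lambda_j\, b(V_i,V_j),
\end{align*}
so conformality of $\eta$ at $p$ (with conformal factor $\rho(p)$) amounts to $\lambda_i\lambda_j b(V_i,V_j)=\rho(p)b(V_i,V_j)$ for $i,j\in\{1,2\}$.

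The off-diagonal equation is automatic, while the two diagonal equations (using that $b$ is positive definite, so $b(V_i,V_i)\neq 0$) force $\rho(p)=\lambda_1^2=\lambda_2^2$. Given $K(p)=\lambda_1\lambda_2<0$, the equality $\lambda_1^2=\lambda_2^2$ is equivalent to $\lambda_1=-\lambda_2$, i.e.\ to $H(p)=0$. Running the argument in reverse shows that $H\equiv 0$ forces conformality with factor $\rho=\lambda_1^2=-K\geq 0$. Finally, since the weighted Dupin metric and the Dupin metric differ by the positive scalar factor $\langle\eta,\xi\rangle$, conformality of $\eta$ with respect to one is equivalent to conformality with respect to the other (with a different conformal factor), which gives the parenthetical remark in the statement.

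The only substantive step is the first one; the rest is bookkeeping in the principal basis. I do not expect any real obstacle, because the self-adjointness of $d\eta_p$ with respect to $b$ and the diagonalization argument already appeared, in essence, in the preceding proposition, so the machinery is in place.
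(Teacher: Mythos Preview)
Your proof is correct and rests on the same key identity as the paper's, namely $h(X,Y)=-b(Y,d\eta_pX)$ and the consequent $b$-self-adjointness of $d\eta_p$. The only organizational difference is that the paper invokes the preceding proposition (the characterization of minimality via $h(d\eta_pX,d\eta_pY)=-K(p)h(X,Y)$) and then translates it into a statement about $b$ through the $h$--$b$ relation, whereas you bypass that proposition and test conformality directly in the $b$-orthogonal principal basis; both routes reduce to $\lambda_1^2=\lambda_2^2$ and are essentially the same argument.
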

\begin{proof} First, notice that
\begin{align}\label{hb} h(X,Y) = -\frac{\langle du^{-1}_{\eta(p)}Y,d\eta_pX\rangle}{\langle\eta,\xi\rangle} = -b(Y,d\eta_pX).
\end{align}
Then, due to the symmetry of $h$, it follows that $d\eta_p$ is self-adjoint with respect to the weighted Dupin metric for each $p\in M$. Using the equality above and the previous proposition, we get that the equality
\begin{align*} -b(Y,d\eta_pX) = h(X,Y) = -K(p)\cdot h(d\eta_pX,d\eta_pY) = K(p) \cdot b(d\eta_pY,d\eta_p\circ d\eta_pX)
\end{align*}
holds if and only if $M$ is minimal. Setting $Z = d\eta_pX$, the above becomes 
\begin{align*} -b(Y,Z) = K(p)\cdot b(d\eta_pY,d\eta_pZ).
\end{align*}
Since $K <0$, we see that $d\eta_p$ is an isomorphism for each $p \in M$. Hence the last equality holds for any $p \in M$ and for any $Y, Z \in T_pM$ if and only if $M$ is minimal.

\end{proof}

In classical differential geometry, minimal surfaces are also characterized as immersions for which the Laplacian of the coordinate functions vanishes. We will prove something similar here. We follow \cite[Section II.6]{nomizu} to define a concept which is analogous to that of the Laplacian for functions defined over $M$. We call it the \emph{b-Laplacian} and denote it by $\Delta_bf$. For this sake (following \cite{diffgeom2} and \cite{nomizu}, and if $\hat{\nabla}$ is the \emph{Levi-Civita connection} of the metric $b$), we define the \emph{b-Hessian} of a function $f \in C^{\infty}(M)$ to be the bilinear map
\begin{align*} \mathrm{hess}_bf:= X(Yf) - (\hat{\nabla}_XY)f,
\end{align*}
for any $X,Y \in C^{\infty}(TM)$. Still following \cite{nomizu}, since $b$ is positive definite, the $b$-Laplacian can be defined simply by taking the trace of $\mathrm{hess}_bf$ with respect to $b$. Formally,
\begin{align*} \Delta_b f(p) := \mathrm{tr}_b\left(\mathrm{hess}_bf|_p\right), \ \ p \in M.
\end{align*}

Notice that this is the \emph{Laplace-Betrami operator} for the Riemannian metric $b$ on $M$. We recall here that the trace with respect to the weighted Dupin metric $b$ is calculated by taking an orthonormal basis for $b$. In the next theorem, we show that (Minkowski) minimal surfaces can be characterized as immersions for which the $b$-Laplacian of the coordinate functions vanishes.

\begin{teo} Let $f=(f_1,f_2,f_3):M\rightarrow(\mathbb{R}^3,||\cdot||)$ be an immersed surface whose Minkowski mean curvature is denoted by $H$. Then $H(p) = 0$ if and only if $\Delta_b f_1 = \Delta_b f_2 = \Delta_b f_3 = 0$. In particular, $M$ is minimal if and only if the Laplacian of its coordinate functions vanishes at every point. 
\end{teo}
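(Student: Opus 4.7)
The plan is to compute each $\Delta_b f_i$ using the Gauss formula. Under the identification $M \subset \mathbb{R}^3$ via the immersion, a tangent vector field $Y$ on $M$ acts on $f_i$ by $Y(f_i) = Y_i$, the $i$-th Euclidean coordinate of $Y$. Therefore, for vector fields $X, Y$ on $M$,
\[
X(Y(f_i)) = (D_X Y)_i = (\nabla_X Y)_i + h(X,Y)\,\eta_i,
\]
where $D$ is the standard connection on $\mathbb{R}^3$ and we have used the Gauss formula $D_X Y = \nabla_X Y + h(X,Y)\eta$. Subtracting $(\hat{\nabla}_X Y)(f_i) = (\hat{\nabla}_X Y)_i$ yields
\[
\mathrm{hess}_b f_i(X, Y) = (\nabla_X Y - \hat{\nabla}_X Y)_i + h(X,Y)\,\eta_i.
\]

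Packaging the three coordinate functions into a single $\mathbb{R}^3$-valued quantity $\Delta_b f := (\Delta_b f_1, \Delta_b f_2, \Delta_b f_3)$ and tracing with respect to $b$ via a local $b$-orthonormal frame $\{e_1, e_2\}$, we obtain
\[
\Delta_b f = T + (\mathrm{tr}_b h)\,\eta, \qquad T := \sum_{k=1}^{2} (\nabla_{e_k} e_k - \hat{\nabla}_{e_k} e_k) \in TM.
\]
From equation (\ref{hb}), $h(X,Y) = -b(Y, d\eta_p X)$, so the endomorphism of $T_pM$ corresponding to $h$ via $b$ is $-d\eta_p$, and hence $\mathrm{tr}_b h = -\mathrm{tr}(d\eta_p) = -2H$. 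Assuming $T \equiv 0$, we obtain $\Delta_b f = -2H\,\eta$; since $\eta$ is nowhere zero, $\Delta_b f_1 = \Delta_b f_2 = \Delta_b f_3 = 0$ at $p$ if and only if $H(p) = 0$.

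The main obstacle is the vanishing of the tangential contribution $T$, an apolarity-type identity relating the equiaffine connection $\nabla$ (induced by the Birkhoff-Gauss map) to the Levi-Civita connection $\hat{\nabla}$ of the weighted Dupin metric $b$. I would establish this using $\nabla\omega = 0$ for the induced volume form $\omega(X,Y) = \det[X,Y,\eta]$ (a consequence of the equiaffinity of $\eta$ proved in \cite{diffgeom}) combined with a direct comparison between $\omega$ and the Riemannian volume form of $b$; this mirrors the classical apolarity for the Pick form in Blaschke affine differential geometry. This verification is expected to be technical but routine, and it is the step most sensitive to the specific choice of the weighted Dupin metric over alternative metrics on $M$.
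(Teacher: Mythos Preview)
Your approach is coordinate-free and arguably more transparent than the paper's: the paper works pointwise in a Monge parametrization adapted to principal directions, cites an identity from \cite{diffgeom2} to obtain $\mathrm{hess}_b g=-h$ for the height function $g$, and then asserts that $\Delta_b x(p)=\Delta_b y(p)=0$ for the two tangential affine coordinates. Unwinding that last assertion, it is exactly your condition $T=\mathrm{tr}_b(\nabla-\hat{\nabla})=0$; so the two routes meet at the same technical step, which the paper treats as ``clear'' while you isolate it explicitly.

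The gap is in your proposed verification of $T=0$. Writing $K=\nabla-\hat{\nabla}$, equiaffinity $\nabla\omega=0$ together with $\hat{\nabla}\omega_b=0$ gives only $\mathrm{tr}(K_X)=X\!\left(\log(\omega/\omega_b)\right)$ for every $X$, i.e.\ information about the trace of $Y\mapsto K(X,Y)$, not about $T=\sum_k K(e_k,e_k)$. Passing from one to the other requires the cubic form $C(X,Y,Z)=b(K(X,Y),Z)$ to be totally symmetric, which is equivalent to $(\nabla_Xb)(Y,Z)$ being symmetric in $X,Y$; but the standard affine Codazzi equation from \cite{nomizu} gives total symmetry of $\nabla h$, not of $\nabla b$, and there is no obvious reason $b$ should inherit such a Codazzi property. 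Moreover, the volume ratio itself is not constant: a direct computation in a $b$-orthonormal frame shows $\omega/\omega_b=\langle\eta,\xi\rangle^{2}\,K_{\partial B}(\eta)^{-1/2}$, which varies with $\eta(p)$ unless $\partial B$ is very special. Thus the Blaschke-style apolarity you invoke does not transfer routinely to the pair $(\nabla,b)$; this step needs a genuinely different argument (or an explicit Codazzi-type identity for $b$ drawn from \cite{diffgeom2}) rather than a volume comparison.
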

\begin{proof} Let $p \in M$, and assume that $(x,y,z)$ be coordinates in $\mathbb{R}^3$ given by $(x,y,z)\mapsto p+ xV_1 + yV_2 + z\eta(p)$, where $V_1$ and $V_2$ are distinct principal directions of $M$ at $p$, nor\-ma\-li\-zed in the weighted Dupin metric (this is a \emph{Monge form parametrization}, see \cite{izumiya}). Therefore, $(x,y,g(q))$ is the position vector of $M$ in a neighborhood of $p$, where $q \in M$ is the intersection of the line $t \mapsto p+xV_1+yV_2 + t\eta(p)$ with $M$. Equality (3.4) in \cite{diffgeom2} gives that, at $p$, we have $\mathrm{hess}_bg(X,Y) = -h(X,Y)$ for any $X,Y \in T_pM$. From this and equality (\ref{hb}), and since $p$ is a critical point of $g$, we get
\begin{align*} \Delta_b g(p) = \mathrm{tr}_b(\mathrm{hess}_bg|_p) = \mathrm{tr}_b(-h) = \mathrm{tr}(d\eta_p) = 2H(p).
\end{align*}
Since we clearly have $\Delta_b x(p) = \Delta_b y(p) = 0$, the proof is complete. Notice that we can ``choose" the coordinates in $\mathbb{R}^3$ because a zero Laplacian remains zero under an affine transformation. 

\end{proof}

\section{Global theorems}\label{global}

In this section we will prove versions of the Hadamard theorems for suitable hypotheses regarding the Minkowski Gaussian curvature. We also prove that, analogously to the Euclidean subcase, if the Minkowski Gaussian curvature of a (closed) surface vanishes in every point, then this surface must be a plane or a cylinder. As we will see, these theorems come as consequences of their Euclidean ``counterparts". Throughout this section we assume that, as usual, all the norms involved are admissible. Also, we say that an immersed surface is \emph{topologically closed} if it is closed in the topology derived from the norm fixed in the space (which is, of course, the same as the topology endowed by the Euclidean norm). Assuming that the surface is topologically closed is an independent-of-the-norm way to deal with surfaces which are \emph{geodesically complete} (or simply \emph{complete}) in Euclidean differential geometry (see \cite{manfredo} for the definition and for the proof of this implication). In such a geometry, the completeness of the surface is an essential hypothesis for the theorems we aim to extend next. The following proposition is the key for the results of this section (see \cite{diffgeom} for a proof).
\begin{prop}\label{gaussposit} Let $f:M\rightarrow(\mathbb{R}^3,||\cdot||)$ be a surface immersed in an admissible normed space. The signs of the Minkowski and Euclidean Gaussian curvatures are the same at any point of $M$. 
\end{prop}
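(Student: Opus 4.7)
The plan is to reduce the statement to a direct computation using the factorization $\eta = u \circ \xi$ (up to orientation) recalled earlier, where $\xi$ is the Euclidean Gauss map of $M$ and $u$ inverts the Euclidean Gauss map $u^{-1}$ of $\partial B$. By the chain rule one has $d\eta_p = du_{\xi(p)} \circ d\xi_p$, so the Minkowski Gaussian curvature $K(p) = \mathrm{det}(d\eta_p)$ factors as a product of two Jacobians whose signs can be controlled individually.

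The three tangent planes $T_p M$, $T_{\xi(p)} S^2$ and $T_{\eta(p)} \partial B$ are mutually parallel in $\mathbb{R}^3$: the first two share the Euclidean unit normal $\xi(p)$, and the third does too because Birkhoff orthogonality of $\eta(p)$ to $T_p M$ forces $\partial B$ to support its tangent plane at $\eta(p)$ parallel to $T_p M$. Identifying the three planes by parallel translation in $\mathbb{R}^3$, the differentials $d\xi_p$, $du_{\xi(p)}$ and $du^{-1}_{\eta(p)}$ all become endomorphisms of a single $2$-plane. Their determinants are then the standard Jacobians of Gauss maps: $\mathrm{det}(d\xi_p) = K^E(p)$, the Euclidean Gaussian curvature of $M$, and $\mathrm{det}(du^{-1}_{\eta(p)}) = K^E_{\partial B}(\eta(p))$, the Euclidean Gaussian curvature of $\partial B$ at $\eta(p)$, so that $\mathrm{det}(du_{\xi(p)}) = 1/K^E_{\partial B}(\eta(p))$.

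Combining these identities yields the explicit formula
\begin{align*}
K(p) = \frac{K^E(p)}{K^E_{\partial B}(\eta(p))}.
\end{align*}
Since admissibility of the norm means exactly that $K^E_{\partial B} > 0$ everywhere on $\partial B$, the denominator is strictly positive and the signs of $K(p)$ and $K^E(p)$ coincide. The main obstacle, such as it is, is purely notational bookkeeping: one has to verify that the sign conventions in the three Gauss maps are compatible with the qualifier ``up to re-orientation'' in the decomposition $\eta = u \circ \xi$, so that the product of Jacobians really picks up a positive factor. Beyond this sign-check, no further geometric input is required.
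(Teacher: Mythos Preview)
Your argument is correct: the factorization $\eta = u\circ\xi$ gives $K(p) = K^E(p)/K^E_{\partial B}(\eta(p))$, and admissibility makes the denominator strictly positive, so the signs agree. Although the paper defers the proof of this proposition to \cite{diffgeom}, the identical computation is carried out later in Lemma~\ref{boundgauss}, where it is written as $K_e(p) = K_{\partial B}(\eta(p))\cdot K(p)$; your approach is therefore the same as the one the paper itself employs.
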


\begin{teo} Let $f:M\rightarrow(\mathbb{R}^3,||\cdot||)$ be a simply connected immersed surface, which is topologically closed. If the Minkowski Gaussian curvature of $M$ is non-positive, then $M$ is diffeormorphic to a plane.
\end{teo}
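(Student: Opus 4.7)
The strategy is to reduce the statement to the classical Euclidean Cartan--Hadamard theorem by means of Proposition \ref{gaussposit}, which guarantees that the Minkowski and Euclidean Gaussian curvatures share the same sign pointwise. Since the hypothesis is that the Minkowski Gaussian curvature is non-positive, I would begin by translating this hypothesis directly: the Euclidean Gaussian curvature of the immersed surface $M$ (with respect to the inner product $\langle\cdot,\cdot\rangle$ on $\mathbb{R}^3$) is non-positive at every point of $M$.

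Next, I would address the completeness requirement. The hypothesis that $M$ is topologically closed in $(\mathbb{R}^3,||\cdot||)$ coincides with being topologically closed in the Euclidean $\mathbb{R}^3$, since the two norms are equivalent. As the authors point out in the paragraph preceding the statement, topological closedness of an immersed surface in $\mathbb{R}^3$ implies geodesic completeness of $M$ endowed with the Riemannian metric induced from $\langle\cdot,\cdot\rangle$ (citing \cite{manfredo}). Thus $M$, viewed as a Riemannian manifold in the classical Euclidean sense, is complete, simply connected, and has non-positive sectional (= Gaussian) curvature.

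I would then invoke the classical Cartan--Hadamard theorem: a complete, simply connected Riemannian manifold of non-positive sectional curvature is diffeomorphic to $\mathbb{R}^n$ via the exponential map at any basepoint. Applied to our two-dimensional $M$, this yields that $M$ is diffeomorphic to $\mathbb{R}^2$, i.e., to a plane, which is exactly the conclusion sought. Since diffeomorphism type is an intrinsic topological/smooth notion that does not depend on whether one views the ambient space as normed or Euclidean, the conclusion transfers back to the Minkowski setting without modification.

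The main conceptual point — and thus the only real obstacle — is to justify that the completeness hypothesis passes correctly between the normed and Euclidean viewpoints and that the Gaussian-curvature hypothesis in the Minkowski sense correctly feeds the Euclidean Cartan--Hadamard theorem. Both have been prepared by the authors in advance (the former by the discussion of topological closedness, the latter by Proposition \ref{gaussposit}), so the argument becomes essentially a one-line reduction. I would therefore keep the written proof short, explicitly citing Proposition \ref{gaussposit} for the curvature sign and Cartan--Hadamard (as stated, e.g., in \cite{manfredo}) for the final diffeomorphism conclusion.
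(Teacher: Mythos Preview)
Your proposal is correct and follows essentially the same route as the paper: use Proposition~\ref{gaussposit} to pass from non-positive Minkowski Gaussian curvature to non-positive Euclidean Gaussian curvature, use topological closedness to get Euclidean geodesic completeness, and then invoke the classical (Cartan--)Hadamard theorem from \cite{manfredo}. The paper's proof is exactly this one-line reduction.
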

\begin{proof} The hypothesis on $M$ being closed implies that it is complete in the Euclidean geometry. From Proposition \ref{gaussposit} it follows that the Euclidean Gaussian curvature is non-positive. Hence the result comes as a consequence of the Hadamard theorem in Euclidean geometry (see \cite[Section 5.6 B, Theorem 1]{manfredo}).

\end{proof}

\begin{teo} Let $f:M\rightarrow(\mathbb{R}^3,||\cdot||)$ be a compact, connected immersed surface. If the Minkowski Gaussian curvature of $M$ is positive, then the Birkhoff-Gauss map $\eta:M\rightarrow\partial B$ is a diffeomorphism.
\end{teo}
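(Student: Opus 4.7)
The plan is to reduce the statement to the classical Euclidean Hadamard/ovaloid theorem via Proposition \ref{gaussposit} and the structural identity $\eta = u \circ \xi$ that is recalled in the introduction. Since we assumed that $M$ has positive Minkowski Gaussian curvature, Proposition \ref{gaussposit} immediately guarantees that the Euclidean Gaussian curvature of $f$ is also positive everywhere. Being compact and connected, $M$ is then an ovaloid in the usual Euclidean sense, so the classical Hadamard theorem (see e.g.\ \cite{manfredo}) applies and yields that the Euclidean Gauss map $\xi:M\rightarrow S^2$ is a diffeomorphism.

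Next I would exploit admissibility of the norm: by assumption the unit sphere $\partial B$ has strictly positive Euclidean Gaussian curvature, so its own Euclidean Gauss map $u^{-1}:\partial B\rightarrow S^2$ is a diffeomorphism, and therefore so is its inverse $u:S^2\rightarrow\partial B$. Composing, we get
\begin{align*}
\eta = u\circ\xi : M\rightarrow\partial B,
\end{align*}
which is the composition of two diffeomorphisms and hence a diffeomorphism itself, proving the theorem.

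The argument is essentially a bookkeeping one, so there is no deep analytic obstacle; the only subtle point to double-check is that the identity $\eta=u\circ\xi$ holds globally (not merely up to sign) under the standing orientation hypothesis on $M$. This is built into the setup described in the introduction (the Birkhoff-Gauss map is globally defined on orientable surfaces, with $\eta$ and $\xi$ chosen so that $\eta=u\circ\xi$ after the appropriate re-orientation), so the reduction is clean. The main ingredients invoked from outside this paper are thus just the Euclidean Hadamard theorem for ovaloids and the elementary fact that a surface of strictly positive curvature in $\mathbb R^3$ has a diffeomorphic Euclidean Gauss map.
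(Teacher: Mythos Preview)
Your proposal is correct and follows essentially the same route as the paper: reduce to the Euclidean Hadamard theorem via Proposition~\ref{gaussposit}, apply the same theorem to the admissible unit sphere $\partial B$ to see that $u^{-1}$ (hence $u$) is a diffeomorphism, and conclude by composing $\eta=u\circ\xi$. The only cosmetic difference is that the paper explicitly notes that $\partial B$ is compact and connected before invoking Hadamard for it, which you left implicit.
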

\begin{proof} Again it follows from Proposition \ref{gaussposit} that the Euclidean Gaussian curvature of $M$ is positive. Therefore, the Euclidean Gauss map $\xi:M\rightarrow \partial B_e$ is a diffeomorphism (see \cite[Section 5.6 B, Theorem 2]{manfredo}). Since the norm is admissible, the Minkowski unit sphere $\partial B$ is itself a compact, connected immersed surface with positive Euclidean Gaussian curvature. It follows that $u^{-1}:\partial B\rightarrow\partial B_e$ is a diffeomorphism. Hence also $\eta = u\circ\xi$ is a diffeomorphism.

\end{proof}

Recall that a \emph{cylinder} is an immersed surface $M$ such that for each point $p \in M$ there is a unique line $r(p) \subseteq M$ through $p$, and if $p \neq q$, then the lines $r(p)$ and $r(q)$ are parallel or coincident.

\begin{teo} Let $f:M\rightarrow(\mathbb{R}^3,||\cdot||)$ be a topologically closed immersed surface whose Minkowski Gaussian curvature is null. Then $M$ is a cylinder or a plane.
\end{teo}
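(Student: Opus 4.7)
The plan is to follow exactly the template used in the two preceding theorems of this section: transport the hypothesis to the Euclidean setting via Proposition \ref{gaussposit}, verify that the Euclidean completeness hypothesis is satisfied, and then invoke the corresponding classical theorem of Euclidean differential geometry.

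First, I would record that the topological closedness of $M$, together with the fact that the Minkowski topology agrees with the Euclidean one, implies that $M$ is geodesically complete in the Euclidean Riemannian metric induced by $\langle\cdot,\cdot\rangle$. This was already pointed out in the paragraph preceding Proposition \ref{gaussposit} with the reference to \cite{manfredo}, so it is just a matter of citing it. Next, Proposition \ref{gaussposit} says that the Minkowski and Euclidean Gaussian curvatures share their sign at every point; in particular, if the Minkowski Gaussian curvature vanishes identically on $M$, so does the Euclidean one. Hence $M$ is a complete, connected, Euclidean-flat surface immersed in $\mathbb{R}^3$.

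At this point I would invoke the classical theorem of Massey / Hartman--Nirenberg stating that a complete flat $C^\infty$ surface in Euclidean $\mathbb{R}^3$ is a generalized cylinder (with the plane appearing as the degenerate case in which all rulings can be chosen parallel and the surface is two-dimensionally ruled). The version available in \cite[Section 5.8]{manfredo} is phrased in exactly the form needed here, and it matches the definition of cylinder given just before the statement of the theorem: through every point there is a unique straight line lying in $M$, and any two such lines are parallel or coincide. Handling the plane as a distinct case is trivial, because if every point of $M$ admits two independent directions along which $M$ contains a straight segment, then $M$ is an open subset of an affine plane, and completeness forces it to be the whole plane.

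I do not anticipate a real obstacle here: all the substantial work has already been done in the Euclidean case and in the first paper of the series (which provides Proposition \ref{gaussposit}). The only point requiring a small amount of care is a purely expository one, namely making the reader comfortable that the paper's definition of \emph{cylinder} is precisely what the cited Euclidean classification theorem produces, and that the \emph{plane} alternative corresponds to the degenerate situation where the ruling direction is not uniquely determined.
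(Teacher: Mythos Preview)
Your argument is correct, and the overall strategy---reduce to the Euclidean classification of complete flat surfaces in \cite[Section 5.8]{manfredo}---is the same as the paper's. The one difference is that the paper does \emph{not} cite Proposition~\ref{gaussposit} for this theorem; instead it argues directly via (\ref{exph}) that any vector $X$ in the kernel of $d\eta_p$ satisfies $h(X,Y)=0$ for all $Y$, hence $D_XY$ is tangential, hence $X$ also lies in the kernel of $d\xi_p$. This yields a slightly stronger conclusion (the null principal \emph{directions} agree, not merely the vanishing of the curvature), but that extra information is not used in the subsequent appeal to the Euclidean theorem, so your shortcut through Proposition~\ref{gaussposit} is a perfectly valid and more economical route.
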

\begin{proof} The proof of this theorem is a consequence of the observation that a principal direction, where the curvature vanishes, is a direction that \textbf{always} determines tangential covariant derivatives. Consequently, the property that a principal curvature is zero at a certain point does not depend on the considered metric. Formally, let $X \in T_pM$ be a non-zero vector such that $d\eta_pX = 0$, where $\eta:M\rightarrow\partial B$ is the Birkhoff-Gauss map of $M$, as usual. The existence of such a vector is, in an admissible Minkowski space, equivalent to saying that the Minkowski Gaussian curvature of $M$ at $p$ is null. From (\ref{exph}), it follows that $h(X,Y) = 0$ for any $Y \in T_pM$, and this means that $D_XY$ is always tangential. It follows that $d\xi_pX =0$, and hence the Euclidean Gaussian curvature of $M$ at $p$ is also null. Therefore, the general case reduces to the Euclidean version of the theorem, which is proven in \cite[Section 5.8]{manfredo}.

\end{proof}

\section{Surfaces with constant Minkowski width}\label{constantwidth}

Let $f:M\rightarrow(\mathbb{R}^3,||\cdot||)$ be a compact, strictly convex immersed surface without boundary. We say that $M$ has \emph{constant Minkowski width} if the (Minkowski) distance between any two parallel supporting hyperplanes of $M$ is the same. This section is devoted to give a result on the principal curvatures of a surface of constant Minkowski width which is similar to its Euclidean version. In what follows, we denote by $S_p:=\{x \in T_pM:||x|| = 1\}$ the unit circle of $T_pM$.

\begin{teo} Let $f:M\rightarrow(\mathbb{R}^3,||\cdot||)$ be a surface of constant Minkowski width having positive Gaussian curvature, and let $p, q \in M$ be any points with parallel tangent planes. Then
\begin{align*} \frac{1}{\max_{X\in S_p}(k_{M,p}(X))} + \frac{1}{\min_{Y\in S_q}(k_{M,q}(Y))} = c,
\end{align*}
where $c \in \mathbb{R}$ is the width of $M$. 
\end{teo}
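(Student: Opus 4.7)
The plan is to exploit the parametrization of $M$ by the inverse of the Birkhoff-Gauss map and turn the constant-width hypothesis into a linear-algebraic identity relating $d\eta_p$ and $d\eta_q$. By the diffeomorphism theorem of the previous section (positive Minkowski Gaussian curvature forces the Euclidean one to be positive, and $\eta:M\to\partial B$ is then a global diffeomorphism), the inverse $\Psi:=\eta^{-1}:\partial B\to M$ is well defined.

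The first step is to translate constant Minkowski width into a functional equation for $\Psi$. Given a Euclidean unit vector $\xi$, a short calculation shows that the Minkowski distance between the two parallel supporting hyperplanes of $M$ with Euclidean normal $\xi$ equals $(\sigma_M(\xi)+\sigma_M(-\xi))/\sigma_B(\xi)$, where $\sigma_M$ and $\sigma_B$ are the support functions of $M$ and of the unit ball $B$. The hypothesis therefore reads $\sigma_M(\xi)+\sigma_M(-\xi)=c\,\sigma_B(\xi)$. Extending both support functions positive-homogeneously of degree one to $\mathbb{R}^3\setminus\{0\}$ and taking gradients (using $\nabla\sigma_M(\xi)=p$ at the point of $M$ with outer normal $\xi$, and $\nabla\sigma_B(\xi)=u(\xi)=\eta(p)$), I obtain the clean geometric identity $\Psi(-\eta)=\Psi(\eta)-c\,\eta$ for every $\eta\in\partial B$; in other words, antipodal points $p$ and $q$ on $M$ are joined by a segment of Minkowski length $c$ directed along $\eta(p)$.

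The second step is to differentiate this identity. For $v\in T_\eta\partial B = T_{-\eta}\partial B=\xi^\perp$, the chain rule gives $-d\Psi_{-\eta}(v)=d\Psi_\eta(v)-cv$, which rearranges to $d\Psi_\eta+d\Psi_{-\eta}=cI$ as endomorphisms of the common plane $\xi^\perp$ (identified with $T_pM$ and $T_qM$ by parallel translation). Since $\Psi=\eta^{-1}$ gives $d\Psi_\eta=(d\eta_p)^{-1}$ and $d\Psi_{-\eta}=(d\eta_q)^{-1}$, the key relation is
\[ (d\eta_p)^{-1}+(d\eta_q)^{-1}=c\,I. \]

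The last step is to read off the eigenvalue statement. Because $k_{M,\cdot}(V)$ is homogeneous of degree zero in $V$, its extrema over the Minkowski unit circle $S_p$ coincide with the extrema of the Rayleigh quotient defining it with respect to the Dupin metric (for which $d\eta_p$ is self-adjoint), and these are precisely the principal curvatures $\lambda_1(p)>\lambda_2(p)>0$. By the key identity, the principal directions at $q$ agree (under the parallel identification) with those at $p$, with principal curvatures $\mu_i=1/(c-1/\lambda_i(p))$; the inequality $\lambda_1(p)>\lambda_2(p)$ forces $\mu_1<\mu_2$, so the direction that maximizes $k_{M,p}$ is exactly the one that minimizes $k_{M,q}$, yielding
\[ \frac{1}{\max_{X\in S_p} k_{M,p}(X)}+\frac{1}{\min_{Y\in S_q} k_{M,q}(Y)}=\frac{1}{\lambda_1(p)}+\frac{1}{\mu_1}=c. \]
The main technical obstacle is step one: verifying that the displacement from $p$ to $q$ is \emph{exactly} $c\,\eta(p)$ — not merely some Minkowski segment of length $c$ between the two parallel tangent planes — is what actually uses constant Minkowski width, and it is carried out via the support-function/gradient computation above. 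Once this is in place, the remainder of the argument is essentially linear algebra.
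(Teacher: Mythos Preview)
Your argument is correct and follows a genuinely different route from the paper's. Both proofs split into the same two tasks: first show that antipodal points satisfy $q=p-c\,\eta(p)$, then extract the curvature relation by differentiation. For the first task the paper sets $h(p):=p-c\,\eta(p)$, writes $h(p)=g(p)+w(p)$ with $w(p)$ tangential, and differentiates $\langle w,\xi\rangle\equiv 0$ to force $w\equiv 0$ via the invertibility of $d\xi_p$. You instead translate constant Minkowski width into the support-function identity $\sigma_M(\xi)+\sigma_M(-\xi)=c\,\sigma_B(\xi)$ and take gradients; this is slicker convex-geometry, and in particular it makes transparent \emph{why} the Minkowski (rather than Euclidean) width is the right hypothesis: it is exactly what puts $\sigma_B$ on the right-hand side so that $\nabla\sigma_B(\xi)=u(\xi)=\eta(p)$ produces the Birkhoff normal.

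For the second task the paper differentiates the two relations $p-c\,\eta(p)=g(p)$ and $p+c\,\eta(g(p))=g(p)$ separately, obtaining eigenvector equations for $dg_p$ in the principal frames at $p$ and at $q$, and then runs a small case analysis comparing the bases $\{V_1,V_2\}$ and $\{W_1,W_2\}$ to reach $(1-c\mu_1)(1-c\lambda_2)=1$. Your single differentiation of $\Psi(-\eta)=\Psi(\eta)-c\,\eta$ gives the operator identity $(d\eta_p)^{-1}+(d\eta_q)^{-1}=c\,I$ on the common tangent plane, from which the pairing of principal directions and the eigenvalue relation $1/\lambda_1+1/\mu_{\min}=c$ fall out without any case split. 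This is cleaner and yields the extra information (implicit but not highlighted in the paper) that the principal directions at antipodal points coincide. The only point you leave implicit is positivity of $c-1/\lambda_i$, but this is forced by $K(q)>0$ together with your outward-normal convention, so the ordering $\mu_1<\mu_2$ is indeed as you claim.
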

\begin{proof} Notice first that since $M$ is strictly convex, we can define a bijective mapping $g:M\rightarrow M$ which associates each $p \in M$ to the point $g(p) \in M$ such that $p$ and $g(p)$ have parallel tangent planes. Since $g\circ g = \mathrm{Id}|_M$, it is clear that $g$ is a diffeomorphism whose differential map is always an endomorphism. \\

Let $\eta:M\rightarrow\partial B$ be the outward point Birkhoff-Gauss normal map. By definition, we have that $\eta(p) = -\eta(g(p))$ for each $p \in M$. Our next step is to prove that the segment joining $p$ and $g(p)$ lies in the direction of $\eta(p)$. To do so, for each $p \in M$ let $h(p) \in g(p)\oplus T_{g(p)}M$ be such that $p - c\eta(p) = h(p)$, and let $w(p)$ be such that $g(p) + w(p) = h(p)$. Differentiating, we have
\begin{align*} X - cD_X\eta = D_Xg + D_Xw,
\end{align*}
for any $X \in T_pM$. It follows that $D_Xw$ is tangential for each $X \in T_pM$. Therefore, denoting the Euclidean Gauss map of $M$ by $\xi$, we have
\begin{align*} 0 = X\langle w,\xi\rangle = \langle D_Xw,\xi\rangle + \langle w,D_X\xi\rangle = \langle w,D_X\xi\rangle
\end{align*}
 for each $X \in T_pM$. Since the Minkowski Gaussian curvature of $M$ is positive, it follows that the Euclidean Gaussian curvature of $M$ is also positive, and therefore $X\mapsto D_X\xi = d\xi_pX$ is an isomorphism. Then we have that $w = 0$, and we get the equality
\begin{align*} p - c\eta(p) = g(p), \ \ p \in M.
\end{align*}
Similarly, we have the equality $p + c\eta(g(p)) = g(p)$. Let $V_1,V_2 \in T_pM$ be principal directions of $M$ at $p$, associated to the principal curvatures $\lambda_1,\lambda_2 \in \mathbb{R}$, respectively. Differentiating the first equality with respect to $V_1$ and $V_2$, we have 
\begin{align}\label{cw1} \begin{split} (1-c\lambda_1)V_1 = dg_pV_1 \ \ \mathrm{and}\\ 
(1-c\lambda_2)V_2 = dg_pV_2, \end{split}
\end{align}
respectively. Differentiating the second equality with respect to a vector $X \in T_pM$ we obtain $X + cd\eta_{g(p)}\circ dg_pX = dg_pX$. Let $W_1,W_2 \in T_{g(p}M$ be the principal directions of $M$ at $g(p)$ associated to principal curvatures $\mu_1,\mu_2 \in \mathbb{R}$, respectively. Substituting $X$ by $dg^{-1}_{g(p)}W_1$ and $dg^{-1}_{g(p)}W_2$, we get
\begin{align*} dg^{-1}_{g(p)}W_1 + c\mu_1W_1 = W_1 \ \ \mathrm{and}\\
dg^{-1}_{g(p)}W_2 + c\mu_2W_2 = W_2.
\end{align*}
Applying $dg_p$ on both sides, we have
\begin{align}\label{cw2} \begin{split} W_1 = (1-c\mu_1)dg_pW_1 \ \ \mathrm{and} \\
W_2 = (1-c\mu_2)dg_pW_2.  \end{split}
\end{align}
Writing $W_1$ and $W_2$ in terms of $V_1$ and $V_2$ and using (\ref{cw1}) and (\ref{cw2}), we obtain immediately
\begin{align*} (1-c\mu_1)(1-c\lambda_1) = 1 \ \ \mathrm{or} \ \ (1-c\mu_1)(1-c\lambda_2) = 1 \ \ \mathrm{and} \\
(1-c\mu_2)(1-c\lambda_1) = 1 \ \ \mathrm{or} \ \ (1-c\mu_2)(1-c\lambda_2) = 1.
\end{align*}
Notice that in both lines we have at least one of the equalities being true, since $W_1$ and $W_2$ are non-zero vectors. Now one sees that if $\lambda_1 = \lambda_2$ or $\mu_1 = \mu_2$, then the desired comes straightforwardly (each equality implies the other). Thus, assume that $\lambda_1 > \lambda_2$ and $\mu_1 > \mu_2$. Then, if $(1-c\mu_1)(1-c\lambda_1) = 1$, we must also have $(1-c\mu_2)(1-c\lambda_2) = 1$, which is a contradiction. It follows that $(1-c\mu_1)(1-c\lambda_2) = 1$, but this equality reads
\begin{align*} \frac{1}{\mu_1} + \frac{1}{\lambda_2} = c,
\end{align*}
which is the desired relation. Observe that the argument is symmetric: we have the same equality changing $\mu_1$ and $\lambda_2$ by $\mu_2$ and $\lambda_1$, respectively. 

\end{proof}

Recall that a point $p \in M$ is said to be \emph{umbilic} if the normal curvature $k_{M,p}$ is constant for every directions of $T_pM$. For a given strictly convex surface, we say that two points with parallel tangent planes are \emph{opposite points}. As an immediate consequence of the previous theorem, we have the following corollary.

\begin{coro} Let $f:M\rightarrow(\mathbb{R}^3,||\cdot||)$ be a strictly convex surface of constant Minkowski width. If $p \in M$ is a umbilic point, then its opposite point is also umbilic. Moreover, if the global maximum value of the map $p \mapsto \mathrm{max}_{V\in S_p}\left(k_{M,p}(V)\right)$ is attained for a umbilic point, then $M$ is a Minkowski sphere. The same holds for the global minimum value of the map $p\mapsto \mathrm{min}_{V\in S_p}\left(k_{M,p}(V)\right)$. 
\end{coro}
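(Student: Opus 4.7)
To prove the first claim, I plan to apply the preceding theorem at the pair $(p, g(p))$ in its two symmetric forms: once as $1/\max_V k_{M,p}(V) + 1/\min_V k_{M,g(p)}(V) = c$, and once with the roles of $p$ and $g(p)$ swapped. When $p$ is umbilic with common principal value $\lambda$, both $\max_V k_{M,p}(V)$ and $\min_V k_{M,p}(V)$ equal $\lambda$, and the two relations collapse to $1/\lambda + 1/\min_V k_{M,g(p)}(V) = c$ and $1/\max_V k_{M,g(p)}(V) + 1/\lambda = c$. Subtracting forces the two extrema of $k_{M,g(p)}$ to coincide, so $g(p)$ is umbilic.

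For the second claim, let $p_0$ be the umbilic point realizing the global maximum $\lambda_0$ of $p \mapsto \max_V k_{M,p}(V)$, and set $q_0 := g(p_0)$. By the first claim, $q_0$ is umbilic, and evaluating the theorem at $(p_0, q_0)$ pins its common principal value to $\mu_0 := 1/(c - 1/\lambda_0)$. At an arbitrary pair $(p, q = g(p))$ the theorem reads $1/\max_V k_{M,p}(V) + 1/\min_V k_{M,q}(V) = 1/\lambda_0 + 1/\mu_0$; the hypothesis $\max_V k_{M,p}(V) \le \lambda_0$ therefore forces $\min_V k_{M,q}(V) \ge \mu_0$ for every $q \in M$. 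Hence $\mu_0$ is the global minimum of $p \mapsto \min_V k_{M,p}(V)$, attained at the umbilic point $q_0$; this is precisely the hypothesis of the third claim. A symmetric derivation yields the reverse implication, so the two last hypotheses are logically equivalent. Under either, all Minkowski principal curvatures of $M$ are pinned to the interval $[\mu_0, \lambda_0]$, with both endpoints attained at the pair $(p_0, q_0)$ of umbilic opposite points.

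The remaining and hardest step is to upgrade this pinning to the conclusion that $M$ is a Minkowski sphere, i.e.\ a translate of a rescaled copy of $\partial B$. The borderline sub-case $\lambda_0 = \mu_0 = 2/c$ is automatic: the pinned interval collapses to a single point, every point of $M$ becomes umbilic with constant principal curvature $2/c$, and $M$ is a translate of $(c/2)\,\partial B$. The sub-case $\lambda_0 > \mu_0$ must be excluded, and here the algebraic content of the theorem alone appears insufficient, since the identity $1/\lambda_1(p) + 1/\lambda_2(g(p)) = c$ leaves the principal curvatures free to oscillate within $[\mu_0, \lambda_0]$. I expect to close the gap with a maximum-principle argument: both the Minkowski mean curvature $H = (\lambda_1 + \lambda_2)/2$ and the Minkowski Gaussian curvature $K = \lambda_1 \lambda_2$ attain their global maxima at the interior umbilic point $p_0$, while the constant-width condition couples their values at opposite points, so propagating the extremality of $H$ (and hence of $K$) should force both principal curvatures to be constant on $M$. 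The third claim then follows from the second by interchanging max and min throughout.
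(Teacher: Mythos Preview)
Your argument for the first claim is correct and is exactly what the paper means by ``immediate''.

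For the second claim the paper's route is much shorter than yours and hinges on one assertion: having fixed the umbilic maximiser $p$ with value $\lambda$, the paper declares $\lambda = 2/c$ and then runs a five-line contradiction. If some $q$ were non-umbilic with principal curvatures $\lambda_1>\lambda_2$ and opposite point $\bar q$ with $\mu_1>\mu_2$, then $\lambda_1\le\lambda=2/c$ together with $c=1/\lambda_1+1/\mu_2$ gives $\mu_2\ge 2/c$, hence $\mu_1>2/c=\lambda$, contradicting maximality. In your language this is precisely the ``borderline sub-case'' $\lambda_0=\mu_0=2/c$, after which every point is forced to be umbilic with the common value $2/c$.

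The genuine gap in your proposal is the one you yourself flag: you do not exclude $\lambda_0>\mu_0$, and the maximum-principle sketch you offer (propagating extremality of $H$ and $K$ via the constant-width coupling) is not carried out and it is not clear it can be, since the relation $1/\lambda_1(p)+1/\lambda_2(g(p))=c$ is compatible with principal curvatures oscillating freely inside $[\mu_0,\lambda_0]$. It is worth noting, however, that this is exactly the step the paper does \emph{not} justify either: applying the preceding theorem at the umbilic pair $(p,g(p))$ yields only $1/\lambda+1/\mu=c$, not $\lambda=\mu$, so the paper's ``Thus \ldots\ $\lambda=2/c$'' is asserted rather than proved. In short, your more careful analysis correctly isolates the crux that the paper glosses over; neither argument, as written, closes it.
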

\begin{proof} We prove only the second claim, since the first one is immediate, and the third one is analogous. Suppose that 
\begin{align*} \lambda:=\max_{p\in M}\left(\max_{V\in S_p}\Big(k_{M,p}(V)\Big)\right)
\end{align*}
is attained for a umbilic point $p \in M$. Thus, if $c$ is the width of $M$, we have $\lambda = 2/c$. Assume that there exists a point $q \in M$ which is not umbilic, and let $\lambda_1 > \lambda_2$ be its principal curvatures. Let $\bar{q}$ be the opposite point to $q$, and let $\mu_1 > \mu_2$ be its principal curvatures. Since $\lambda \geq \lambda_1$, we get
\begin{align*} c = \frac{1}{\lambda_1} + \frac{1}{\mu_2} \geq \frac{c}{2} + \frac{1}{\mu_2},
\end{align*}
and it follows that $\mu_2 \geq 2/c$. Hence $\mu_1 > 2/c = \lambda$, and this is a contradiction.

\end{proof}

\section{The induced metric}\label{metric}

We want to study how the ambient metric is inherited by a surface immersed in a Minkowski space. In classical differential geometry, this is mainly done via the classical Hopf-Rinow theorem, but in that context the arguments depend heavily on the fact that \emph{geodesics} locally minimize lengths (see \cite{manfredo}). Since this cannot be directly ``translated" into the language of normed spaces, we adopt the viewpoint presented in \cite{burago}, namely regarding the surface as a length space. The arguments in this section are somehow standard in Finsler geometry, but some proofs are made easier in our context since here we can use a topologically equivalent Euclidean structure. \\

Let $f:M\rightarrow(\mathbb{R}^3,||\cdot||)$ be a connected immersed surface, and assume that $\sigma:[a,b]\rightarrow M$ is a piecewise smooth curve on $M$. The \emph{Minkowski length} $l(\sigma)$ of $\sigma$ is naturally defined as
\begin{align*} l\left(\sigma|_{[a,b]}\right):= l(\sigma) := \int_a^b||\sigma'(t)||dt.
\end{align*}
This definition endows $M$ with a \emph{length structure} (in the sense of \cite{burago}). As usual, we define a metric in $M$ as
\begin{align*} d(p,q) := \inf_{\sigma}l(\sigma), 
\end{align*}
where $p,q \in M$ and the infimum is taken over all piecewise smooth curves $\sigma:[a,b]\rightarrow M$ connecting $p$ and $q$. It is easy to see that $d:M\times M\rightarrow\mathbb{R}$ defined this way is indeed a metric in the usual sense, and we call it the \emph{induced Minkowski metric} (or \emph{distance}). Now we will briefly explore the topology induced by this metric. Our main objective in this section is to determine whether any two points in $M$ can be joined by a piecewise smooth curve whose length equals the distance between them.

\begin{prop}\label{topM} Assume that $M$ is closed with respect to the topology induced by $\mathbb{R}^3$. Then $(M,d)$ is a complete metric space. Moreover, $(M,d)$ is locally compact. 
\end{prop}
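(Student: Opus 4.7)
The plan is to exploit the equivalence of norms on the finite-dimensional space $\mathbb{R}^3$, reducing everything to the classical Euclidean analogue. Since all norms on $\mathbb{R}^3$ are equivalent, there exist constants $c_1,c_2>0$ with $c_1\|v\|_e \le \|v\| \le c_2\|v\|_e$ for every $v\in\mathbb{R}^3$, where $\|\cdot\|_e$ denotes the Euclidean norm. Integrating this inequality along any piecewise smooth curve $\sigma$ in $M$ yields $c_1 l_e(\sigma) \le l(\sigma) \le c_2 l_e(\sigma)$, where $l_e$ is the Euclidean length, and taking infima over all curves joining two fixed points gives
\[ c_1\,d_e(p,q) \le d(p,q) \le c_2\,d_e(p,q), \]
where $d_e$ is the induced metric on $M$ arising from the Euclidean norm. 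Hence $(M,d)$ and $(M,d_e)$ are bi-Lipschitz equivalent; they share the same Cauchy sequences, the same compact sets, and the same topology, so it suffices to establish both assertions for $d_e$.

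For completeness, I would take a Cauchy sequence $\{p_n\}\subseteq M$ in $d_e$. Since any curve in $M$ joining $p$ to $q$ has Euclidean length at least $\|p-q\|_e$, we have $\|p_n-p_m\|_e \le d_e(p_n,p_m)$, so $\{p_n\}$ is also Cauchy in the ambient Euclidean metric and converges to some $p\in\mathbb{R}^3$. The assumption that $M$ is closed in $\mathbb{R}^3$ forces $p\in M$. To promote this ambient convergence to intrinsic convergence, I would use that $M$ is a smoothly embedded surface: a neighborhood of $p$ in $M$ admits a smooth local parametrization whose inverse is Lipschitz, so on that neighborhood $d_e$ is bounded above by a constant multiple of the ambient Euclidean distance. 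Once $p_n$ lies in this neighborhood, $d_e(p_n,p)\to 0$ follows.

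For local compactness, each $p\in M$ possesses a neighborhood $V\subseteq M$ whose closure in $\mathbb{R}^3$ is, for instance, the image of a small closed Euclidean disk under a smooth local parametrization at $p$. This closure is compact as a subset of $\mathbb{R}^3$ and is contained in $M$; by the topological equivalence above, it is also compact in $(M,d)$ and provides a compact $d$-neighborhood of $p$.

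The main conceptual hurdle is the last step of the completeness argument: upgrading ambient convergence of $\{p_n\}$ in $\mathbb{R}^3$ to intrinsic convergence in $d_e$. This is precisely where one genuinely needs $M$ to be a smoothly embedded (not merely topologically closed) surface, the Lipschitz control being furnished by a local parametrization around the limit point. Everything else is a matter of carefully unwinding definitions and invoking finite-dimensional norm equivalence.
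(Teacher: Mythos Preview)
Your reduction to the Euclidean intrinsic metric $d_e$ via norm equivalence is exactly what the paper does. Where you diverge is in how you then handle $(M,d_e)$: the paper simply invokes Riemannian geometry, citing that a closed submanifold of $\mathbb{R}^3$ is geodesically complete and hence metrically complete (Hopf--Rinow), and deducing local compactness from the exponential map being a local diffeomorphism. You instead give a hands-on argument: Cauchy in $d_e$ implies Cauchy in the ambient metric, closedness supplies a limit point in $M$, and a Lipschitz local chart upgrades ambient convergence to intrinsic convergence; similarly, local compactness comes directly from pulling back a closed disk through a chart. Your route is more elementary and self-contained, avoiding any appeal to geodesics or Hopf--Rinow; the paper's route is shorter but leans on standard Riemannian machinery. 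One caveat worth flagging: your chart-based step tacitly assumes the immersion is locally an embedding near the limit point (which is automatic for immersions), but your phrasing ``smoothly embedded surface'' is slightly stronger than what the paper assumes globally; the argument still goes through since only a local embedding around $p$ is needed.
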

\begin{proof} In our context, this is slightly easier than in general Finsler manifolds. The reason is that we can just compare the Minkowski metric in $M$ with an auxiliary usual Euclidean metric. Let $||\cdot||_e:=\sqrt{\langle\cdot,\cdot\rangle}$ denote the Euclidean norm. Therefore, the Euclidean length $l_e(\sigma)$ of a curve $\sigma:[a,b]\rightarrow M$ is given by
\begin{align*} l_e(\sigma) = \int_a^b||\sigma'(t)||_edt.
\end{align*}
This length structure induces a metric $d_e$ defined in the same way as $d$. Since any two norms in a finite vector space are equivalent, we may fix a constant $c > 0$ such that
\begin{align*} \frac{1}{c}||\cdot||_e \leq ||\cdot|| \leq c||\cdot||_e.
\end{align*}
Thus, the same inequality holds for the Minkowski and the Euclidean lengths on $M$. Consequently, we have that the metrics $d$ and $d_e$ are equivalent:
\begin{align*} \frac{1}{c}d_e(\cdot,\cdot) \leq d(\cdot,\cdot) \leq cd_e(\cdot,\cdot).
\end{align*}
It follows that the topology induced by $d$ is the same as the topology induced by $d_e$, and therefore we can use the known results for the Euclidean subcase. Our result follows from the fact that if $M$ is closed in the topology of $\mathbb{R}^3$, then it is \emph{geodesically complete}, and hence complete as a metric space (see \cite[Chapter 5]{manfredo} and \cite[Chapter VII]{manfredo2}). \\

The fact that $(M,d_e)$ is locally compact comes from the observation that, for each $p \in M$, the \emph{exponential map} $\exp_p:T_pM\rightarrow M$ is a diffeomorphism in a neighborhood of $0 \in T_pM$. Again we refer the reader to \cite{manfredo} for further details. 
 
\end{proof}

\begin{remark} Notice that the distance associated to the length structure induced by the Dupin metric determines on $M$ the same topology as the Euclidean and Minkowski distances. To verify this, one has analogously to bound the Dupin norm in terms of the Euclidean norm, by using the extremal values of the norm operator of $du^{-1}_q$ as $q$ varies through the (compact) unit circle $\partial B$. \\
\end{remark}

Combining Proposition \ref{topM} with Theorem 2.5.23 in \cite{burago}, we have immediately the main result of this section.

\begin{teo} Let $f:M\rightarrow(\mathbb{R}^3,||\cdot||)$ be an immersed surface which is closed in the topology induced by the ambient space. Then, for any $p,q \in M$, there exists a curve $\gamma:[a,b]\rightarrow M$ joining $p$ and $q$ such that $l(\gamma) = d(p,q)$. 
\end{teo}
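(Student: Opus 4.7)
The plan is to recognize the theorem as a direct application of the Hopf-Rinow-Cohn-Vossen theorem for length spaces (Theorem 2.5.23 of \cite{burago}), which asserts that in a complete, locally compact length space any two points can be joined by a shortest curve. So the argument consists of verifying the three hypotheses ``length space'', ``complete'' and ``locally compact'' for $(M,d)$.

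By construction, $d(p,q) = \inf_\sigma l(\sigma)$ is the intrinsic metric generated by the length functional $l(\sigma) = \int_a^b ||\sigma'(t)||\, dt$, so $(M,d)$ carries a natural length-space structure. One must verify that $d$ is a \emph{strictly intrinsic} metric in the sense of \cite[Section 2.3]{burago}, i.e., that the length induced on curves by $d$ itself coincides with $l$ on piecewise smooth curves. This is standard for Finsler-type length structures, and in our setting it reduces to the Euclidean case via the two-sided comparison $c^{-1}||\cdot||_e \leq ||\cdot|| \leq c||\cdot||_e$ already used in the proof of Proposition \ref{topM}: the Minkowski length and the Euclidean length of a piecewise smooth curve are comparable, and the Euclidean length is strictly intrinsic.

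Completeness and local compactness of $(M,d)$ are then precisely the content of Proposition \ref{topM}. With all three hypotheses in place, Theorem 2.5.23 of \cite{burago} yields a curve $\gamma:[a,b]\to M$ connecting $p$ and $q$ with $l(\gamma) = d(p,q)$. The only non-cosmetic step is the strict-intrinsicality check of the previous paragraph; once this is accepted, the conclusion is immediate, exactly as the author indicates.
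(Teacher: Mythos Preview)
Your proposal is correct and follows exactly the paper's approach: the paper simply states that the result is immediate from combining Proposition~\ref{topM} with Theorem~2.5.23 of \cite{burago}. Your additional paragraph on strict intrinsicality merely spells out what the paper leaves implicit, but the argument is otherwise identical.
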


From now on, such minimizing curves will be called \emph{Minkowski geodesics}, or simply \emph{geodesics}. As a matter of fact, the Minkowski geodesics are smooth curves. For a proof, we refer the reader to \cite[Chapter 5]{shen}. There, the minimizing curves (or \emph{shortest paths}) are obtained as the \emph{trajectories} of the \emph{Finsler spray} (see the mentioned reference for precise definitions). It seems to be difficult to find further ``good" characterizations of the geodesics in our context. However, we can find a family of Finsler metrics on the usual $2$-sphere $S^2$ for which we can guarantee the existence of closed geodesics (the problem on finding closed geodesics in Riemannian and Finsler manifolds is a very active topic of research, see, e.g., \cite{long}). \\

It is easy to see that the unit sphere $\partial B$ of a normed space $(\mathbb{R}^3,||\cdot||)$ has infinitely many closed geodesics (in the induced ambient norm). Namely, any geodesic connecting two antipodal points must close, since the symmetry of the norm guarantees that the antipodal curve is also a geodesic. Therefore, intuitively, if we can deform isometrically $\partial B$ to become $S^2$ with a Finsler metric $F$, say, then $(S^2,F)$ has infinitely many closed geodesics. We fomalize this idea as follows. We say that a Finsler metric $F$ on $S^2$ is \emph{of immersion type} if the following holds: there exists a smooth and strictly convex body $K\subseteq\mathbb{R}^3$ (in the sense that its Euclidean Gaussian curvature is strictly positive) and a diffeomorphism $u:\partial K\rightarrow S^2$ such that
\begin{align*} F(u(x),du_xv) = ||v||,
\end{align*}
for any $x \in \partial K$ and $v \in T_x\partial K$, where $||\cdot||$ is the norm in $\mathbb{R}^3$ inherited from $K$ by the \emph{Minkowski functional} (see \cite{thompson}). In other words, a Finsler metric $F$ on $S^2$ is of immersion type if $(S^2,F)$ is (globally) isometric to the unit sphere of some admissible norm $||\cdot||$ on $\mathbb{R}^3$. 

\begin{teo} Let $F$ be a Finsler metric on $S^2$. If $F$ is of immersion type, then $(S^2,F)$ has infinitely many closed geodesics.
\end{teo}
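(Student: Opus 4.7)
My plan is to exploit the central symmetry $A: x \mapsto -x$, which is an isometry of $\partial B$ because $||-v|| = ||v||$. First I would use the immersion-type hypothesis to identify $(S^2, F)$ isometrically with the unit sphere $(\partial B, ||\cdot||)$ of some admissible norm on $\mathbb{R}^3$, equipped with its induced ambient length structure; it then suffices to produce infinitely many closed geodesics there. By the theorem immediately preceding the statement, any two points of $\partial B$ can be joined by a minimizing geodesic, since $\partial B$ is compact and hence closed in $\mathbb{R}^3$.

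Given $p \in \partial B$, I would choose a minimizing geodesic $\gamma_p:[0,L_p] \to \partial B$ from $p$ to $-p$, where $L_p = d(p,-p)$. The composition $A \circ \gamma_p = -\gamma_p$ is then a minimizing geodesic from $-p$ back to $p$ of the same length. Concatenating them produces a closed piecewise-smooth curve $\sigma_p$ of length $2L_p$, characterized by the antipodal symmetry $\sigma_p(t + L_p) = -\sigma_p(t)$.

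The main obstacle is to show that $\sigma_p$ is an honest closed geodesic and not merely a broken curve with possible corners at $p$ and $-p$. My plan is to pass to the quotient $\bar{B} := \partial B / A$, a compact Finsler surface homeomorphic to $\mathbb{RP}^2$, on which the Finsler structure is well-defined because $A$ is a fixed-point-free involutive isometry. The projection $\bar\sigma_p$ is a loop on $\bar B$ of length $L_p$ representing the non-trivial element of $\pi_1(\bar B) = \mathbb{Z}/2$, since the two halves of $\sigma_p$ are exchanged by the deck transformation. Every loop in this homotopy class lifts to a path on $\partial B$ from $p$ to $-p$ of the same length, so has length at least $L_p$; hence $\bar\sigma_p$ realizes the infimum of length in its free homotopy class. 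A standard first-variation argument then forces such a minimizer to be smooth at every parameter value, because any presumed corner could be rounded off by a short geodesic shortcut, yielding a strictly shorter loop in the same class and contradicting minimality. Therefore $\bar\sigma_p$ is a smooth closed geodesic on $\bar B$, and lifting, $\sigma_p$ is a smooth closed geodesic on $\partial B$.

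Finally, to extract infinitely many geometrically distinct closed geodesics, I would argue by dimension. Each $\sigma_p$ has one-dimensional image inside the two-dimensional surface $\partial B$, yet $p \in \sigma_p$ for every $p \in \partial B$. If only finitely many $\sigma_p$ were geometrically distinct, their union would be a finite union of one-dimensional subsets of $\partial B$, hence of two-dimensional measure zero; this contradicts $\bigcup_{p \in \partial B} \sigma_p = \partial B$. Consequently $\{\sigma_p\}_{p \in \partial B}$ contains infinitely many distinct closed geodesics, which pull back through the isometry to infinitely many closed geodesics on $(S^2, F)$.
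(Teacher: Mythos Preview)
Your overall strategy coincides with the paper's: reduce to the unit sphere $\partial B$ via the given isometry, and for each $p\in\partial B$ concatenate a minimizing geodesic $\gamma_p$ from $p$ to $-p$ with its antipodal reflection $-\gamma_p$ to obtain a closed curve $\sigma_p$. The paper simply asserts (in the paragraph preceding the theorem) that this concatenation is a closed geodesic and that infinitely many arise. You go further, supplying a clean dimension/measure argument for the infinitude of geometrically distinct $\sigma_p$'s, which is correct and a genuine addition.

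Your justification of smoothness at the junction points, however, contains a gap. You claim that every loop in the nontrivial \emph{free} homotopy class of $\bar B\cong\mathbb{RP}^2$ lifts to a path on $\partial B$ from $p$ to $-p$; in fact such a loop lifts to a path from some $q$ to $-q$, and under free homotopy the antipodal pair is allowed to move. Hence the length of a competitor is bounded below by $d(q,-q)$, not by $L_p=d(p,-p)$, and $\bar\sigma_p$ is a global minimizer in its free homotopy class only when $p$ realizes $\min_{q}d(q,-q)$. For a general $p$ your corner-rounding step does not yield a contradiction: shortening near the corner $\bar p$ produces a shorter loop, but one based at a nearby point, which is permitted in the free class. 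What you do get is that $\bar\sigma_p$ minimizes among \emph{based} loops at $\bar p$, but since the potential corner sits exactly at the basepoint, this is not enough for smoothness there. The paper does not address this issue either, so your argument is no less complete than the paper's --- but the specific reason you give for smoothness is incorrect as stated.
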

\begin{proof} Let $\gamma:S^1\rightarrow\partial K$ be a closed geodesic of $\partial K$ (with respect to the induced metric $||\cdot||$). By definition, we have that the diffeomorphism $u:\partial K\rightarrow S^2$ is an isometry. Hence $u\circ\gamma:S^1\rightarrow S^2$ is a closed geodesic of $(S^2,F)$. Since there are infinitely many closed geodesics in $\partial K$, we have the result.

\end{proof}

\section{Estimates for perimeter and diameter}\label{control}

This section is devoted to find bounds on the Minkowski Gaussian curvature in terms of the Euclidean Gaussian curvature, with the aim of estimating the \emph{diameter} of a surface under certain hypotheses. As a consequence, we give an upper bound for the \emph{perimeter} of $(\mathbb{R}^3,||\cdot||)$ (in the sense of Sch\"{a}ffer, see \cite{schaffer}). In what follows, $K(p)$ and $K_e(p)$ denote the Minkowski and Euclidean Gaussian curvatures of a surface $M$ in a point $p \in M$, respectively. Also, $K_{\partial B}(q)$ denotes the Euclidean Gaussian curvature of the unit sphere $\partial B$ at a point $q \in \partial B$. As usual, we define the \emph{diameter} of $M$ to be the number $\mathrm{diam}(M):=\sup_{p,q\in M}d(p,q)$, where $d$ is the Minkowski metric of $M$.

\begin{lemma}\label{boundgauss} For each $p \in M$, we have the bounds
\begin{align*} mK(p) \leq K_e(p) \leq \bar{m}K(p),
\end{align*}
where $m = \inf_{q\in\partial B}K_{\partial B}(q)$ and $\bar{m} = \sup_{q\in\partial B}K_{\partial B}(q)$. 
\end{lemma}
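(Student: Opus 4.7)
The key identity I would establish is
\begin{align*}
K_e(p) \;=\; K(p)\cdot K_{\partial B}(\eta(p)),
\end{align*}
from which the claimed bounds follow by sandwiching $K_{\partial B}(\eta(p))$ between $m$ and $\bar m$ (multiplying by $K(p)$, which has the same sign as $K_e(p)$ by Proposition \ref{gaussposit}).

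The plan is to exploit the factorization $\eta = u\circ \xi$ already recorded in the introduction. Differentiating at $p$ yields
\begin{align*}
d\eta_p \;=\; du_{\xi(p)}\circ d\xi_p .
\end{align*}
The subtle point is that $d\eta_p$, $d\xi_p$ and $du_{\xi(p)}$ should be viewed as endomorphisms of a single 2-plane, so that taking determinants is unambiguous. First I would observe that $T_pM$ is parallel to $T_{\eta(p)}\partial B$: indeed, Birkhoff orthogonality means that $T_pM$ is a supporting plane of $B$ at $\eta(p)$, hence parallel to the tangent plane of $\partial B$ at $\eta(p)$. Next, $T_{\xi(p)}S^2$ is the plane through the origin orthogonal to $\xi(p)$, and the same is true of $T_{\eta(p)}\partial B$ because the Euclidean outer normal to $\partial B$ at $\eta(p)$ is $u^{-1}(\eta(p)) = \xi(p)$. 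All three tangent planes are therefore translates of the same plane $\xi(p)^{\perp}$, and all three differentials are endomorphisms of $\xi(p)^{\perp}$.

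Taking determinants of the composition gives
\begin{align*}
K(p) \;=\; \det(d\eta_p) \;=\; \det\bigl(du_{\xi(p)}\bigr)\cdot \det(d\xi_p) \;=\; \det\bigl(du_{\xi(p)}\bigr)\cdot K_e(p).
\end{align*}
Since $u$ is the inverse of the Euclidean Gauss map $u^{-1}$ of $\partial B$, one has $du_{\xi(p)} = \bigl(d(u^{-1})_{\eta(p)}\bigr)^{-1}$, whence
\begin{align*}
\det\bigl(du_{\xi(p)}\bigr) \;=\; \frac{1}{\det\bigl(d(u^{-1})_{\eta(p)}\bigr)} \;=\; \frac{1}{K_{\partial B}(\eta(p))}.
\end{align*}
This yields the key identity $K_e(p) = K(p)\cdot K_{\partial B}(\eta(p))$. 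Substituting the definitions of $m$ and $\bar m$, and noting (via Proposition \ref{gaussposit}) that $K(p)$ and $K_e(p)$ share their sign while $K_{\partial B}>0$, the desired two-sided bound drops out.

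The only real obstacle is the identification of the three tangent planes so that the product-of-determinants step is legitimate; once that bookkeeping is done, the rest is the chain rule and an inversion of a determinant. No delicate estimate or convexity argument is required beyond what is already packaged into the admissibility of the norm and the finiteness of $m,\bar m$ (which is automatic from compactness of $\partial B$ and continuity of $K_{\partial B}$).
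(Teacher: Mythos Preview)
Your argument is correct and essentially identical to the paper's: both obtain the identity $K_e(p)=K_{\partial B}(\eta(p))\cdot K(p)$ via the chain rule applied to the factorization of the Gauss maps, and then read off the bounds. The paper uses $\xi=u^{-1}\circ\eta$ directly (so $\det(d\xi_p)=\det(du^{-1}_{\eta(p)})\det(d\eta_p)$) rather than inverting $\det(du_{\xi(p)})$, and omits your careful discussion of the tangent-plane identifications, but the substance is the same.
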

\begin{proof} Recall that $\xi = u^{-1}\circ\eta$. For each $p \in M$, we have
\begin{align*} K_e(p) = \mathrm{det}\left(d\xi_p\right) = \mathrm{det}\left(du^{-1}_{\eta(p)}\right)\cdot\mathrm{det}\left(d\eta_p\right) = K_{\partial B}(\eta(p))\cdot K(p).
\end{align*}
The desired bounds come straightforwardly. 

\end{proof}

\begin{remark} Notice that since we are assuming that the norm is admissible, together with the compactness of $\partial B$ it follows that $0 < m,\bar{m} < \infty$. \\
\end{remark}

Now we use this to estimate the diameter of a surface whose Minkowski Gaussian curvature is bounded from below by a positive constant. The estimate is optimal in the sense that for the Euclidean case we just re-obtain Bonnet's classical theorem (cf. \cite{manfredo}). 

\begin{teo}\label{bonnet} Let $M$ be a closed surface whose Minkowski Gaussian curvature satisfies $K \geq \varepsilon > 0$. Then the diameter of $M$ (in the induced ambient metric) has the upper bound
\begin{align*} \mathrm{diam}(M) \leq \frac{\pi}{c\sqrt{m\varepsilon}},
\end{align*}
where 
\begin{align*}c = \inf_{v\in \partial B}\frac{||v||_e}{||v||} \ \ (> 0),
\end{align*}
and $m\in\mathbb{R}$ is defined as in Lemma \ref{boundgauss}. In particular, $M$ is compact. 
\end{teo}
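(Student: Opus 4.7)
The plan is to reduce to the classical Euclidean Bonnet theorem by transferring the curvature bound and then comparing the two induced distances.

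First, I would use Lemma \ref{boundgauss} to translate the hypothesis $K \geq \varepsilon$ into a Euclidean curvature bound: namely $K_e(p) \geq m K(p) \geq m\varepsilon > 0$ for every $p \in M$. Since $M$ is topologically closed in $\mathbb{R}^3$, it is geodesically complete with respect to the ambient Euclidean structure (as already used in Proposition \ref{topM}). Together with connectedness (which I would argue is implicit, or otherwise apply the argument componentwise), the classical Bonnet theorem yields the Euclidean diameter bound
\begin{equation*}
\mathrm{diam}_e(M) \;\leq\; \frac{\pi}{\sqrt{m\varepsilon}}.
\end{equation*}

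Next, I would compare the Minkowski length of a curve to its Euclidean length. The constant $c = \inf_{v \in \partial B}\|v\|_e/\|v\|$ is strictly positive by compactness of $\partial B$ and admissibility of the norm, and by homogeneity one gets $\|w\|_e \geq c\,\|w\|$ for every $w \in \mathbb{R}^3$. Applied pointwise to $\sigma'(t)$ for any piecewise smooth curve $\sigma$, this gives $l_e(\sigma) \geq c\, l(\sigma)$. Taking the infimum over curves joining $p$ and $q$ in $M$, this yields
\begin{equation*}
d(p,q) \;\leq\; \frac{1}{c}\, d_e(p,q)
\end{equation*}
for any $p,q \in M$. Combining this with the Euclidean diameter bound gives the claimed inequality $\mathrm{diam}(M) \leq \pi/(c\sqrt{m\varepsilon})$.

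Finally, compactness of $M$ follows because $M$ is closed in $\mathbb{R}^3$ and bounded in the Euclidean metric (since $d_e \leq c\cdot d \leq \pi/\sqrt{m\varepsilon}$), so $M$ is a closed bounded subset of $\mathbb{R}^3$. The only slightly delicate point is making sure the hypotheses of classical Bonnet are satisfied in the Euclidean setting, in particular the positivity of $c$ and $m$ and the equivalence of the two notions of completeness; both follow from the admissibility of $\|\cdot\|$ and compactness of $\partial B$, as already exploited in Proposition \ref{topM} and Lemma \ref{boundgauss}, so no genuine new obstacle arises.
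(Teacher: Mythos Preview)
Your argument is essentially identical to the paper's: transfer the curvature bound via Lemma~\ref{boundgauss}, apply the classical Euclidean Bonnet theorem, and then use the norm comparison $\|w\|_e \geq c\,\|w\|$ to pass from $\mathrm{diam}_e(M)$ to $\mathrm{diam}(M)$. The only slip is in your final line, where the inequality ``$d_e \leq c\cdot d$'' is the wrong way around (you had just shown $c\,d \leq d_e$); boundedness of $M$ in $\mathbb{R}^3$ follows instead directly from the Euclidean diameter bound $\mathrm{diam}_e(M) \leq \pi/\sqrt{m\varepsilon}$ already obtained (or simply from the compactness conclusion of classical Bonnet).
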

\begin{proof} If $K \geq \varepsilon > 0$, then we have $K_e \geq m\varepsilon > 0$. Therefore, by Bonnet's theorem from classical differential geometry, it follows that
\begin{align*} \mathrm{diam}_e(M) \leq \frac{\pi}{\sqrt{m\varepsilon}},
\end{align*}
where $\mathrm{diam}_e(M)$ denotes the diameter of $M$ in the Euclidean metric. From the definition of the constant $c$, we have $c||v|| \leq ||v||_e$ for any $v \in \mathbb{R}^3$. It follows immediately that 
\begin{align*}\mathrm{diam}(M) \leq \frac{1}{c}\cdot\mathrm{diam}_e(M) \leq \frac{\pi}{c\sqrt{m\varepsilon}},
\end{align*}
and the desired follows. 

\end{proof}

\begin{remark} The compactness of $M$ under the hypothesis of Theorem \ref{bonnet} was already proved in \cite{diffgeom}. The new result here is the bound for the diameter of the surface. \\
\end{remark}

The \emph{perimeter} of a normed space is defined to be twice the supremum of the induced Minkowski distances between antipodal points of its unit sphere. We can use Theorem \ref{bonnet} to provide an upper bound for the perimeter of a normed space which only depends on the Euclidean Gaussian curvature of $\partial B$. \\

Assume that the Euclidean auxiliary structure in $\mathbb{R}^3$ is re-scaled in such a way that the Euclidean unit sphere bounds the largest Euclidean (closed) ball contained in the Minkowski (closed) ball $B$. This way, the constant $c \in \mathbb{R}$ defined in Theorem \ref{bonnet} becomes $1$ (indeed, it is attained for the touching points of $\partial B$ and $\partial B_e$).   

\begin{teo} Let $\rho(\partial B)$ denote the perimeter of a normed space $(\mathbb{R}^3,||\cdot||)$, which is assumed to be smooth and admissible. Then we have the inequality
\begin{align*} \rho(\partial B) \leq \frac{2\pi}{\sqrt{m}},
\end{align*}
where $m = \inf_{q\in\partial B}K_{\partial B}(q)$, as usual. 
\end{teo}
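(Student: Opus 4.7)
The plan is to apply Theorem \ref{bonnet} directly to the unit sphere $\partial B$ itself, viewed as an immersed closed surface in $(\mathbb{R}^3,\|\cdot\|)$. The perimeter is $\rho(\partial B)=2\sup_{x\in\partial B}d(x,-x)\le 2\,\mathrm{diam}(\partial B)$, so it suffices to bound the Minkowski diameter of $\partial B$.

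First I would compute the Birkhoff--Gauss map and Minkowski Gaussian curvature of the surface $\partial B$ itself. For any $x\in\partial B$, the tangent plane $T_x\partial B$ supports the unit ball $B$ at $x$ by definition of tangent plane, and $\|x\|=1$; hence $x\dashv_B T_x\partial B$, which gives (up to orientation) $\eta(x)=x$. Therefore $d\eta_x=\mathrm{Id}$, and the Minkowski Gaussian curvature of $\partial B$ satisfies $K\equiv 1$ everywhere.

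Next I would apply Theorem \ref{bonnet} to $M=\partial B$ with $\varepsilon=1$. Because the Euclidean auxiliary structure has been rescaled so that $\partial B_e$ is the largest Euclidean ball contained in $B$, one has $\|v\|_e\ge\|v\|$ for all $v$, with equality at the touching points, so the constant $c$ of Theorem \ref{bonnet} equals $1$. Consequently
\begin{align*}
\mathrm{diam}(\partial B)\le\frac{\pi}{c\sqrt{m\varepsilon}}=\frac{\pi}{\sqrt{m}},
\end{align*}
and doubling gives $\rho(\partial B)\le 2\pi/\sqrt{m}$, as claimed.

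There is no real obstacle here beyond verifying the identification $\eta(x)=x$ on $\partial B$, which is immediate from the definition of Birkhoff orthogonality. The only delicate point is making sure that the rescaling convention stated just before the theorem really does produce $c=1$ in Theorem \ref{bonnet}; this follows because the touching set of $\partial B$ and $\partial B_e$ is nonempty (by the definition of largest inscribed Euclidean ball) and attains the infimum $\|v\|_e/\|v\|=1$.
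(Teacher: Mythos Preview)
Your proof is correct and follows essentially the same route as the paper: apply Theorem \ref{bonnet} to $M=\partial B$ with $\varepsilon=1$ and $c=1$, obtaining $\mathrm{diam}(\partial B)\le\pi/\sqrt{m}$, then use $\rho(\partial B)\le 2\,\mathrm{diam}(\partial B)$. You even supply the short computation $\eta(x)=x$ yielding $K\equiv 1$ on $\partial B$, which the paper simply cites from \cite{diffgeom}.
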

\begin{proof} The Minkowski Gaussian curvature of $\partial B$ equals $1$ (cf. \cite{diffgeom}). Therefore, assuming that the auxiliary Euclidean structure is re-scaled as described above and applying Theorem \ref{bonnet}, we get
\begin{align*} \mathrm{diam}(\partial B) \leq \frac{\pi}{\sqrt{m}}.
\end{align*}
Since we obviously have $\rho(\partial B) \leq 2\mathrm{diam}(\partial B)$, the result follows. 

\end{proof}

\bibliography{bibliography}

\end{document}